\newcommand{\communique}{\leftrightarrow}
\newcommand{\N}{\mathbb{Z}_{+}}
\newcommand{\Zd}{\mathbb{Z}^d}
\newcommand{\C}{\mathbb{C}}
\newcommand{\R}{\mathbb{R}}
\newcommand{\Rd}{\mathbb{R}^d}
\renewcommand{\P}{\mathbb{P}}
\newcommand{\E}{\mathbb{E}}
\newcommand{\Ed}{\mathbb{E}^d}
\renewcommand{\epsilon}{\varepsilon}
\renewcommand{\phi}{\varphi}
\renewcommand{\limsup}{\overline{\lim}}
\newcommand{\miniop}[3]{%
\renewcommand{\arraystretch}{0.6}
\begin{array}{c}
{\scriptstyle #1}\\
#2\\
{\scriptstyle #3}
\end{array}
\renewcommand{\arraystretch}{1}}
\newcommand{\1}{1\hspace{-1.3mm}1}
\newcommand{\Var}{\text{Var }}
\newcommand{\A}[1]{A_{\ref{#1}}}
\newcommand{\alphaa}[1]{\alpha_{\ref{#1}}}
\newcommand{\B}[1]{B_{\ref{#1}}}
\newcommand{\betaa}[1]{\beta_{\ref{#1}}}
\renewcommand{\C}[1]{C_{\ref{#1}}}
\newcommand{\D}[1]{D_{\ref{#1}}}
\begin{document}

\newtheorem{theorem}{Théorème}[section]
\newtheorem{conjecture}[theorem]{Conjecture}
\newtheorem{lemme}[theorem]{Lemme}
\newtheorem{defi}[theorem]{Définition}
\newtheorem{coro}[theorem]{Corollaire}
\newtheorem{rem}[theorem]{Remarque}
\newtheorem{prop}[theorem]{Proposition}


\title[Déviations modérées de la distance chimique]{Déviations modérées de la distance chimique}

\author{Olivier Garet}
\address{Institut \'Elie Cartan Nancy (mathématiques)\\
Nancy-Université, CNRS \\
Boulevard des Aiguillettes B.P. 239\\
F-54506 Vand\oe uvre-lès-Nancy,  France.}
\email{Olivier.Garet@iecn.u-nancy.fr}
\author{Régine Marchand}
\email{Regine.Marchand@iecn.u-nancy.fr}

\def\motsclefs{Percolation, distance chimique, d\'eviations mod\'er\'ees, in\'egalit\'es de concentration, sous-additivit\'e.}

\subjclass[2000]{60K35, 82B43.} 
\keywords{\motsclefs}

\begin{abstract}
Le but de cet article est l'étude des déviations modérées de la distance chimique, c'est à dire de la plus courte distance sur l'amas de percolation de Bernoulli.
Ainsi, on étudie la taille des fluctuations aléatoires autour de la valeur moyenne, ainsi que le comportement asymptotique de cette dernière. Ceci permet de préciser la convergence dans le théorème de forme asymptotique.
L'article se base principalement sur des estimées de concentration de Boucheron, Lugosi et Massart ainsi que sur la théorie des fluctuations des fonctions sous-additives d'Alexander.   
\end{abstract}
\thanks{Ce travail a bénéficié d'une aide de l'Agence Nationale de la Recherche
portant la référence ANR-08-BLAN-0190.}

\maketitle 

\section{Introduction et résultats}

On s'intéresse au modèle de percolation de Bernoulli surcritique sur les arêtes de $\Zd$, où $d\ge 2$ est un entier fixé.  
L'ensemble des arêtes de $\Zd$ est noté $\Ed$. Sur l'ensemble $\Omega=\{0,1\}^{\Ed}$, on considère la tribu produit et la probabilité $\P=\mathcal{B}(p)^{\otimes \Ed}$, sous laquelle les coordonnées $(\omega_e)_{e \in \Ed}$ sont des variables aléatoires indépendantes identiquement distribuées de loi de Bernoulli de paramètre $p$. Pour toute la suite, on fixe  un paramètre $p>p_c(\Zd)$, où $p_c(\Zd)$ est le seuil critique de percolation sur~$\Zd$.

On note $C_{\infty}$ l'amas infini de percolation. La distance chimique $D(x,y)$ entre deux point $x,y$ de $\Zd$ est la longueur, en nombre d'arêtes, du plus court chemin ouvert entre ces deux points. \`A grande distance, cette distance chimique est équivalente à une norme $\mu$ déterministe -- voir Garet et Marchand~\cite{GM-fpppc}.
En fait, on dispose même d'estimées de grandes déviations, comme nous l'avons montré dans~\cite{GM-large}: 
\begin{equation*}
\label{GDmu}
\forall \epsilon>0 \quad \miniop{}{\limsup}{\Vert y\Vert_1\to +\infty} \frac{\log \P \left(0\communique   y, \; \frac{D(0,y)}{\mu(y)}\notin (1-\epsilon, 1+\epsilon)\right)}{\Vert y\Vert_1}<0.
\end{equation*}
Notre but ici est de montrer les résultats de déviations modérées suivants: 
\begin{theorem}
\label{concentrationD}
Il existe une constante $\C{concenD}>0$ telle que
\begin{equation}
\forall y \in \Zd \quad \E \left( \left| D(0,y)-\mu(y)\right| \1_{\{0\communique   y\}} \right) \le \C{concenD}\sqrt{\|y\|_1}\log(1+\|y\|_1). \label{concenD}
\end{equation}
Il existe des constantes $\A{equmoderter},\B{equmoderter},\C{equmoderter}>0$  telles que
pour tout $y$ dans $\Zd$ non nul, 
\begin{equation}
\forall x\in [\C{equmoderter} (1+\log \|y\|_1),\|y\|_1^{1/2}]\quad  \P \left( \frac{| D(0,y)-\mu(y)|}{\sqrt{\|y\|_1}}> x , \; 0 \communique y\right) \le \A{equmoderter} e^{- \B{equmoderter} x}. \label{equmoderter}
\end{equation}
Il existe une constante $\C{equtfa}>0$ telle que, $\P$-presque sûrement, sur l'événement $\{0 \communique \infty\}$, pour tout $t$ assez grand,
\begin{equation}
\mathcal{B}_{\mu}^0(t-\C{equtfa}\sqrt{t}\log t)\cap C_{\infty} \subset B^0(t)\subset  \mathcal{B}_{\mu}^0(t+\C{equtfa}\sqrt{t}\log t), \label{equtfa}
\end{equation}
où on note $B^0(t)=\{x \in\Zd: \;  D(0,x)\le t\}$ la boule de rayon $t$ pour la distance chimique et $\mathcal{B}_{\mu}^0(t)$ la boule de rayon $t$ pour la norme $\mu$.
\end{theorem}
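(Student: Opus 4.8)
The plan is to deduce the three statements of Theorem \ref{concentrationD} in a chain, starting from concentration estimates for $D(0,y)$ and Alexander-type control of the bias $\E(D(0,y)\1_{\{0\communique y\}})-\mu(y)$, then bootstrapping to the moderate deviation inequality, and finally a Borel--Cantelli argument to get the almost sure shape estimate \eqref{equtfa}.

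\emph{Step 1: the $L^1$ estimate \eqref{concenD}.} I would split
$$\E\big(|D(0,y)-\mu(y)|\1_{\{0\communique y\}}\big)\le \E\big(|D(0,y)-a(y)|\1_{\{0\communique y\}}\big)+|a(y)-\mu(y)|,$$
where $a(y)$ is a centering constant, e.g. a conditional median or mean of $D(0,y)$ on $\{0\communique y\}$. For the first term, the key input is a concentration inequality of Boucheron--Lugosi--Massart type: $D(0,y)$ is, up to the percolation conditioning, a function of the i.i.d.\ edge variables whose variation when flipping one edge is controlled, and only a number of edges comparable to $D(0,y)$ itself (hence, on the relevant event, comparable to $\|y\|_1$) actually matters. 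This yields a sub-Gaussian-at-scale-$\sqrt{\|y\|_1}$ bound, giving $\E|D(0,y)-a(y)|\1_{\{0\communique y\}}=O(\sqrt{\|y\|_1})$; handling the conditioning on $\{0\communique y\}$ (which has probability bounded below when $0\in C_\infty$, and otherwise costs an exponentially small correction via the large deviation estimate \eqref{GDmu}) is a technical but routine point. For the second term $|a(y)-\mu(y)|$, I would invoke Alexander's method for subadditive functions: the sub-Gaussian fluctuations just obtained plug into Alexander's machinery to give $|a(y)-\mu(y)|=O(\sqrt{\|y\|_1}\log\|y\|_1)$, which dominates and produces the stated bound. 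This first step is really the heart of the matter.

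\emph{Step 2: the moderate deviation bound \eqref{equmoderter}.} Here I would again center at $a(y)$ and write, for $x$ in the stated range,
$$\P\Big(\tfrac{|D(0,y)-\mu(y)|}{\sqrt{\|y\|_1}}>x,\ 0\communique y\Big)\le \P\Big(\tfrac{|D(0,y)-a(y)|}{\sqrt{\|y\|_1}}>x-\tfrac{|a(y)-\mu(y)|}{\sqrt{\|y\|_1}},\ 0\communique y\Big).$$
By Step 1, $|a(y)-\mu(y)|/\sqrt{\|y\|_1}\le \C{equmoderter}'\log\|y\|_1$, so the constraint $x\ge \C{equmoderter}(1+\log\|y\|_1)$ guarantees the shifted threshold is at least $x/2$ (say). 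The concentration inequality from Step 1 then gives $\P(|D(0,y)-a(y)|>\tfrac{x}{2}\sqrt{\|y\|_1},\ 0\communique y)\le A e^{-Bx}$ in the regime $x\le \|y\|_1^{1/2}$ (beyond which the sub-Gaussian tail would require the variance proxy to be linear, which is exactly why the range is truncated at $\|y\|_1^{1/2}$). Again one must absorb the $\{0\communique y\}$ conditioning, using that its probability is bounded away from $0$ or, when it is not, that $\P(0\communique y)$ itself is exponentially small in $\|y\|_1$ hence certainly in $e^{-Bx}$ on the given range.

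\emph{Step 3: the almost sure shape theorem \eqref{equtfa}.} The inclusion $\mathcal{B}^0_\mu(t-\C{equtfa}\sqrt t\log t)\cap C_\infty\subset B^0(t)$ fails iff there is $x\in C_\infty$ with $\mu(x)\le t-\C{equtfa}\sqrt t\log t$ but $D(0,x)>t$, i.e.\ $D(0,x)-\mu(x)>\C{equtfa}\sqrt t\log t$; similarly for the other inclusion with $\mu(x)\le t+\C{equtfa}\sqrt t\log t$ roughly and $D(0,x)\le t$. In both cases $\|x\|_1$ is of order $t$ (up to the norm equivalence $\mu\asymp\|\cdot\|_1$ on $C_\infty$ and the trivial $D\ge\|\cdot\|_1$), so applying \eqref{equmoderter} with $x\sim c\sqrt t\log t$ — which lies in the admissible range $[\C{equmoderter}(1+\log\|x\|_1),\|x\|_1^{1/2}]$ precisely because $\sqrt t\log t$ beats $\log t$ and loses to $\sqrt t$ — gives a probability summable in $t$ along integer times after summing over the $O(t^d)$ relevant vertices $x$, provided $\C{equtfa}$ (hence the effective constant in front of $x$) is chosen large enough. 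A Borel--Cantelli argument along $t\in\mathbb{N}$, together with monotonicity in $t$ to interpolate between integers, finishes the proof on $\{0\communique\infty\}$.

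\emph{Main obstacle.} The crux is Step 1: obtaining a clean concentration inequality for the chemical distance with the right variance proxy $\|y\|_1$ despite the facts that (i) $D(0,y)$ is only defined, and only subadditive, on the percolation cluster, so the conditioning on connectivity events must be threaded through every estimate, and (ii) the "number of influential coordinates" is itself the random quantity $D(0,y)$, so the Boucheron--Lugosi--Massart bound must be applied in a self-bounding / conditioned form and then combined with the a priori linear bound $D(0,y)\lesssim \|y\|_1$ valid with overwhelming probability from \eqref{GDmu}. Once this is in place, feeding it into Alexander's subadditivity framework and then into the elementary Borel--Cantelli argument of Step 3 is comparatively mechanical.
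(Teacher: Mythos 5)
Your Step 3 matches the paper exactly: the almost sure shape estimate (\ref{equtfa}) is obtained by a standard Borel--Cantelli argument from the moderate-deviation bound (\ref{equmoderter}), and the paper simply points to Alexander's Theorem 3.1 for this. Steps 1 and 2, however, contain a genuine gap. You propose to center $D(0,y)$ at a conditional mean or median $a(y)$ on $\{0\communique y\}$ and then ``invoke Alexander's method for subadditive functions'' to get $|a(y)-\mu(y)|=O(\sqrt{\|y\|_1}\log\|y\|_1)$. But Alexander's machinery requires a bona fide positive \emph{subadditive} function $h:\Zd\to\R$, and $y\mapsto\E[D(0,y)\mid 0\communique y]$ (or the conditional median) is not subadditive: the conditioning event depends on $y$ and the triangle inequality for $D$ only holds where both pieces are finite, so this step does not go through as written.

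The paper resolves precisely this obstruction --- and the ones you correctly flag in your ``Main obstacle'' paragraph --- by introducing the surrogate $D^*(x,y)=D(x^*,y^*)$, where $x^*$ is the point of $C_\infty$ closest to $x$. This functional is everywhere finite, stationary, and genuinely subadditive, so $h(y)=\E[D^*(0,y)]$ is a subadditive function to which Alexander's theory applies; this is the content of Theorem \ref{concentrationD*}. Moreover, even for $D^*$ a direct Boucheron--Lugosi--Massart argument does not apply, because the geodesic is not confined to a deterministic box; the paper therefore renormalizes to $D^t$ (adding mesoscopic ``red'' edges of length $Kt$) to get a localized functional, applies BLM there, and then controls $|D^*-D^t|$. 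The proof of Theorem \ref{concentrationD} in the paper is then just the short reduction you would need at the end of your plan: on $\{0\communique y,\,0\communique\infty\}$ one has $D(0,y)=D^*(0,y)$, and $\P(0\communique y,\,0\not\communique\infty)\le\A{amasfini}e^{-\B{amasfini}\|y\|_1}$, so all the $D^*$-estimates transfer to $D(0,y)\1_{\{0\communique y\}}$ with exponentially small corrections. Two further points your sketch misattributes: the lower cutoff $x\ge\C{equmoderter}(1+\log\|y\|_1)$ in (\ref{equmoderter}) is not only there to absorb the bias $|a(y)-\mu(y)|$; it is also forced by the renormalization scheme (the paper takes the mesoscopic scale $t$ proportional to $x$, and needs $t$ of at least logarithmic order for the approximation errors to be negligible), and the upper cutoff $x\le\|y\|_1^{1/2}$ is not an intrinsic failure of the method but simply the regime beyond which the large-deviation bound already gives stronger information. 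Declaring the rest ``comparatively mechanical'' therefore understates the work: what you have sketched is essentially an outline of Theorem \ref{concentrationD*} without the constructions ($D^*$, $D^t$, the red-edge renormalization, and the BK-type argument for the $Q_x$-skeleton) that make it run.
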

Dans le travail de Kesten concernant la percolation de premier passage, l'analogue de l'estimée~(\ref{equmoderter}) s'étend pour $x \in[0, \|y\|_1]$. Cependant, pour $x\in[\sqrt{\|y\|_1},\|y\|_1]$, les inégalités de grandes déviations donnent de meilleurs résultats que les déviations modérées; par contre, nous ne pouvons descendre dans la zone $x\in[0,\C{equmoderter} (1+\log \|y\|_1)]$ pour des raisons techniques liées à un procédé d'approximation et de renormalisation.

Remarquons d'emblée que comme tous les points ne sont pas dans l'amas infini, la distance chimique entre deux points peut être infinie, ce qui pose des problèmes évidents. On en construit donc une variation de la façon suivante: pour $x\in \Zd$, on note $x^*$ le point de l'amas infini le plus proche de $x$ (pour la norme $\|.\|_1$). En cas d'égalité, on choisit $x^*$ de manière à minimiser $x^*-x$ pour une règle fixée à l'avance (par exemple l'ordre lexicographique). On définit alors
$$\forall x,y \in \Zd \quad D^*(x,y)=D(x^*,y^*).$$
Nous allons principalement travailler avec $D^*$, 
et il ne sera pas difficile, en contrôlant la différence entre ces deux quantités, de revenir à  la distance chimique $D$. 
Voici les résultats que nous obtenons pour la distance chimique modifiée $D^*$:
\begin{theorem}
\label{concentrationD*}
Il existe une constante $\C{equvar}>0$ telle que 
\begin{equation}
\forall y \in \Zd \quad \Var D^*(0,y)\le \C{equvar}\|y\|_1\log(1+\|y\|_1). \label{equvar}
\end{equation}
Pour tout $\D{equmoder}>0$, il existe des constantes $\A{equmoder},\B{equmoder},\C{equmoder}>0$ telles que
pour tout $y$ dans $\Zd$ non nul, 
\begin{equation}
\forall x\in [\C{equmoder} (1+\log \|y\|_1),\D{equmoder}\sqrt{\|y\|_1}]\quad  \P \left( \frac{| D^*(0,y)-\E[D^*(0,y)] |}{\sqrt{\|y\|_1}}> x \right) \le \A{equmoder} e^{- \B{equmoder} x}. \label{equmoder}
\end{equation}
Il existe une constante $\C{equetlesperance1}>0$  telle que
\begin{equation}
\forall y \in \Zd\backslash\{0\} \quad 0  \le  \E[D^*(0,y)] -\mu(y)  \le  \C{equetlesperance1} \sqrt{\|y\|_1}\log (1+\|y\|_1). \label{equetlesperance1}
\end{equation}
Les deux estimées précédentes donnent immédiatement l'existence de constantes $\A{equmoderbis},\B{equmoderbis},\C{equmoderbis}$ strictement positives telles que
pour tout $y$ dans $\Zd$ non nul, 
\begin{equation}
\forall x\in [\C{equmoderbis} (1+\log \|y\|_1),\sqrt{\|y\|_1}]\quad  \P \left( \frac{| D^*(0,y)-\mu(y)|}{\sqrt{\|y\|_1}}> x \right) \le \A{equmoderbis} e^{- \B{equmoderbis} x}. \label{equmoderbis}
\end{equation}
\end{theorem}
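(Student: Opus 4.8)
\emph{Strat\'egie.} Les quatre estim\'ees reposent sur une m\^eme analyse de concentration, men\'ee dans l'ordre suivant: d'abord la concentration de $D^*(0,y)$ autour de $\E[D^*(0,y)]$, qui fournit \`a la fois l'estim\'ee de d\'eviation (\ref{equmoder}) et la borne de variance (\ref{equvar}); ensuite le passage de $\E[D^*(0,y)]$ \`a $\mu(y)$, c'est-\`a-dire (\ref{equetlesperance1}), par la m\'ethode d'Alexander; enfin la combinaison des deux par in\'egalit\'e triangulaire, qui donne (\ref{equmoderbis}).

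\emph{Concentration.} On voit $Z=D^*(0,y)$ comme une fonction des variables ind\'ependantes $(\omega_e)_{e\in\Ed}$ et l'on veut contr\^oler l'effet du r\'e\'echantillonnage d'une ar\^ete. Deux obstacles: cet effet n'est pas born\'e (dans une configuration d\'efavorable, une g\'eod\'esique peut \^etre longue et $0^*,y^*$ \'eloign\'es de $0,y$), et $Z$ d\'epend de l'amas infini tout entier. Pour le second, je remplacerais $D^*$ par un substitut $\hat D$ \`a port\'ee de d\'ependance finie et \`a accroissements contr\^ol\'es, construit par renormalisation \`a l'\'echelle de blocs $\ell$ (blocs \emph{bons} au sens de Pisztora et Antal--Pisztora); comme la probabilit\'e qu'un bloc soit mauvais d\'ecro\^it exponentiellement en $\ell$, le choix $\ell$ d'ordre $\log\|y\|_1$ assure qu'avec grande probabilit\'e aucun bloc n'est mauvais le long d'une g\'eod\'esique, donc que $|\hat D(0,y)-D^*(0,y)|$ est n\'egligeable devant $\sqrt{\|y\|_1}$ (on utilise ici les estim\'ees de grandes d\'eviations de~\cite{GM-large} et le th\'eor\`eme de forme de~\cite{GM-fpppc}). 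Pour le premier obstacle, j'appliquerais \`a $\hat D$ les in\'egalit\'es de concentration de Boucheron--Lugosi--Massart issues de la m\'ethode de l'entropie: l'\'energie $\sum_e (Z-Z'_e)^2$ se contr\^ole, \`a un facteur $\log\|y\|_1$ pr\`es li\'e \`a l'\'echelle de renormalisation, par $\|y\|_1$ (les g\'eod\'esiques anormalement longues \'etant \'ecart\'ees par les estim\'ees de grandes d\'eviations), ce qui conduit \`a une majoration de type $\exp(-cx^2/\log\|y\|_1)$ pour $\P(|\hat D(0,y)-\E\hat D(0,y)|>x\sqrt{\|y\|_1})$. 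Cette borne ne l'emporte sur $\A{equmoder}e^{-\B{equmoder}x}$ que pour $x\ge\C{equmoder}(1+\log\|y\|_1)$, d'o\`u le seuil de (\ref{equmoder}); la m\^eme \'energie, inject\'ee dans l'in\'egalit\'e d'Efron--Stein, donne la variance d'ordre $\|y\|_1\log(1+\|y\|_1)$ de (\ref{equvar}). On revient enfin de $\hat D$ \`a $D^*$ par le contr\^ole bilat\'eral de $|\hat D-D^*|$.

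\emph{De la moyenne \`a la norme.} La minoration $\E[D^*(0,y)]\ge\mu(y)$ est \'el\'ementaire: puisque $x\mapsto x^*$ est d\'eterministe, $D^*$ v\'erifie \emph{exactement} l'in\'egalit\'e triangulaire, donc $n\mapsto\E[D^*(0,ny)]$ est sous-additive; compte tenu de l'invariance par translation en loi et de l'\'egalit\'e $\mu(y)=\lim_n\E[D^*(0,ny)]/n$ (voir~\cite{GM-fpppc}), le lemme de Fekete donne $\mu(y)=\inf_{n\ge1}\E[D^*(0,ny)]/n\le\E[D^*(0,y)]$. La majoration $\E[D^*(0,y)]-\mu(y)\le\C{equetlesperance1}\sqrt{\|y\|_1}\log(1+\|y\|_1)$ rel\`eve de la m\'ethode d'approximation des fonctions sous-additives d'Alexander: une fonction sous-additive dont les fluctuations autour de sa moyenne sont d'ordre $\sqrt{\|y\|_1}\log\|y\|_1$ --- ce que fournit l'\'etape pr\'ec\'edente, sous la forme d'une queue polynomiale aussi petite que l'on veut pour $x$ d'ordre $\log\|y\|_1$ --- approche sa forme limite \`a la m\^eme vitesse. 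Il reste \`a v\'erifier que le cadre pr\'esent (sous-additivit\'e exacte, invariance par translation en loi, estim\'ee (\ref{equmoder}), majoration lin\'eaire $\E[D^*(0,y)]\le C\|y\|_1$ issue du th\'eor\`eme de forme, $\mu$ norme) satisfait les hypoth\`eses du th\'eor\`eme d'Alexander, puis \`a l'appliquer.

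\emph{Conclusion et obstacle principal.} L'estim\'ee (\ref{equmoderbis}) s'obtient en \'ecrivant $|D^*(0,y)-\mu(y)|\le|D^*(0,y)-\E[D^*(0,y)]|+\bigl(\E[D^*(0,y)]-\mu(y)\bigr)$: pour $\C{equmoderbis}$ assez grand, la borne (\ref{equetlesperance1}) donne $\E[D^*(0,y)]-\mu(y)\le\tfrac12 x\sqrt{\|y\|_1}$ d\`es que $x\ge\C{equmoderbis}(1+\log\|y\|_1)$, et l'on applique (\ref{equmoder}) (avec $\D{equmoder}=1$) au niveau $x/2$. L'obstacle principal est l'\'etape de concentration: faire cohabiter la d\'ependance \`a longue port\'ee de $D^*$ via l'amas infini, qui impose la renormalisation, avec le caract\`ere non born\'e des influences d'ar\^etes, qui oblige \`a contr\^oler les moments exponentiels des accroissements dans la m\'ethode de l'entropie --- tout en conservant la concentration autour de la \emph{vraie} moyenne $\E[D^*(0,y)]$, ce qui exige un contr\^ole bilat\'eral et quantitatif de l'approximation $\hat D\approx D^*$. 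C'est cette renormalisation \`a l'\'echelle $\log\|y\|_1$ qui est responsable \`a la fois du seuil $x\ge\C{equmoder}(1+\log\|y\|_1)$ et du facteur logarithmique dans (\ref{equvar}) et (\ref{equetlesperance1}).
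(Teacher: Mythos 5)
Votre strat\'egie est, dans ses grandes lignes, celle du papier: une distance renormalis\'ee \`a port\'ee de d\'ependance finie pour appliquer les in\'egalit\'es de Boucheron--Lugosi--Massart, la m\'ethode d'Alexander pour passer de $\E[D^*]$ \`a $\mu$, puis l'in\'egalit\'e triangulaire pour conclure. Quelques diff\'erences de mise en \oe uvre et deux omissions m\'eritent toutefois d'\^etre signal\'ees. D'abord, le substitut $\hat D$ que vous d\'ecrivez comme une renormalisation par blocs bons \`a la Pisztora est ici la distance $D^t$ obtenue en ajoutant des ar\^etes \emph{rouges} de longueur $Kt$ entre tout couple de points d'une m\^eme bo\^ite m\'esoscopique $\Lambda_k^t$; ce choix donne la borne d\'eterministe $D^t(0,y)\le K(\|y\|_1+t)$, donc la localisation a priori des g\'eod\'esiques et la majoration du nombre de bo\^ites visit\'ees, que votre description par blocs bons/mauvais ne fournit pas imm\'ediatement. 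Ensuite, le papier prend $t=\gamma x$, \'echelle \emph{proportionnelle au niveau de d\'eviation}, ce qui donne directement une queue $\exp(-\B{devDgammax}x)$; votre choix d'une \'echelle fixe $\ell\sim\log\|y\|_1$ donnerait un r\'esultat \'equivalent au-dessus du seuil, mais l'optimisation du papier est plus directe. Surtout, vous passez sous silence deux points techniques r\'eels: (i) l'asym\'etrie des in\'egalit\'es de Boucheron--Lugosi--Massart --- la borne naturelle $S^{(i)}-S\le Kt\1_{R_i}$ ne contr\^ole que $V_-$; pour l'autre queue il faut la variante de Rossignol et Th\'eret, in\'egalit\'e~(\ref{boulimatata}), faute de quoi rien ne majore $V_+$; (ii) pour l'\'etape d'Alexander, $D^*$ n'est \emph{pas} monotone en $\omega$ (puisque $x\mapsto x^*$ d\'epend de la configuration), ce qui interdit d'appliquer une in\'egalit\'e de type BK directement \`a $D^*$; le papier doit jongler entre $D$ et $D^*$ via les quantit\'es $Y_i$ et $Z_i$ dans le lemme de d\'enombrement des $Q_x$-chemins, et ce n'est pas une simple v\'erification des hypoth\`eses du th\'eor\`eme~\ref{Alex2}.
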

Comme en percolation de premier passage (voir l'article de Kesten~\cite{kesten-modere}), la preuve de ce résultat se décompose essentiellement en deux étapes:
\begin{itemize}
\item le contrôle des fluctuations de $D^*(0,x)$ autour de sa valeur moyenne (propriété de concentration),
\item le contrôle de l'écart entre la valeur moyenne de $D^*(0,x)$ et $\mu(x)$.
\end{itemize}

Le travail original de Kesten utilisait des techniques de martingales.
Des techniques analogues ont été déployées par Howard et Newman~\cite{MR1849171} pour la percolation de premier passage euclidienne~\cite{MR1452554} et par Pimentel~\cite{pimentel-preprint} pour le modèle de Vahidi-Asl et Wierner.
Depuis, les travaux de Talagrand~\cite{MR1361756} ont mis en lumière l'importance des techniques de concentration, et ont permis d'améliorer les estimées de Kesten. Ainsi, récemment, Benaïm et Rossignol~\cite{MR2451057} ont pu affiner les estimées sur la variance en percolation de premier passage.
Ici, nous allons contrôler les fluctuations~(\ref{equvar}) et~(\ref{equmoder})  à l'aide de récentes inégalités de Boucheron, Lugosi et Massart~\cite{MR1989444}, qui sont puissantes et plus simples à mettre en oeuvre que le résultat abstrait de Talagrand.

Le contrôle~(\ref{equetlesperance1}) de l'écart entre la valeur moyenne $\E[D^*(0,y)]$ et son équivalent $\mu(y)$ se base
habituellement sur les déviations modérées des fluctuations étudiées précédemment. Cela se voit très clairement dans les modèles à symétrie sphérique, comme chez Howard et Newman~\cite{MR1849171} -- on pourra aussi consulter le survol de Howard~\cite{MR2023652}. Un argument de symétrie peut également donner des preuves simples dans la direction principale, voir Alexander~\cite{MR1202516}.
Dans une direction quelconque, les choses sont beaucoup plus difficiles. 
Ici, nous avons choisi d'utiliser les techniques développées par Alexander~\cite{Alex97} dans son article sur l'approximation des fonctions sous-additives.

Le papier s'organise comme suit: dans une première partie, on donne une série d'estimées sur la distance chimique et sur la distance modifiée qui dérivent principalement des résultats d'Antal et Pisztora~\cite{AP96};
on montre aussi comment le théorème~\ref{concentrationD} contrôlant les déviations modérées pour la distance chimique découle du théorème~\ref{concentrationD*} relatif à $D^*$.
Dans une deuxième partie, on montre les estimées~(\ref{equvar}) et~(\ref{equmoder}) du théorème~\ref{concentrationD*}. On utilise ici les inégalités de concentration de Boucheron, Lugosi et Massart, ainsi qu'une technique de renormalisation à l'échelle mésoscopique.
La troisième partie est consacrée au contrôle de l'écart~(\ref{equetlesperance1}) entre $\mu(x)$ et $\E[D^*(0,x)]$, suivant la méthode d'Alexander.


\section{Quelques estimées}


\subsection{Classical estimates}

Pour  $x$ dans $\Zd$, on note $C(x)$ l'amas de percolation contenant $x$, $|C(x)|$ est son cardinal.

Grâce à Chayes, Chayes, Grimmett, Kesten et Schonmann~\cite{CCGKS}, on
peut contrôler le diamètre des clusters finis: il existe deux constantes strictement positives $\A{amasfini}$ et $\B{amasfini}$ telles que
\begin{equation}
\label{amasfini}
\forall r>0 \quad
 \P \left( |C(0)|<+\infty, \; C(0) \not\subset [-r,\dots,r]^d \right)
\le \A{amasfini}e^{-\B{amasfini} r}.
\end{equation}

On peut également controler la taille des trous dans l'amas infini: il existe deux constantes strictement positives  $\A{amasinfini}$ et $\B{amasinfini}$ 
telles que
\begin{equation}
\label{amasinfini}
\forall r>0 \quad
 \P \left( C_\infty \cap  [-r,\dots,r]^d=\varnothing \right)
\le \A{amasinfini}e^{-\B{amasinfini} r}.
\end{equation}
Quand $d=2$, ce résultat résulte des estimées de grandes déviations de Durrett
et Schonmann~\cite{DS}. Leurs méthodes peuvent être aisément transposées à  $d\ge 3$.
Néanmoins, quand $d\ge 3$, la manière la plus simple d'obtenir ce résultat est d'utiliser les résultats de Grimmett
and Marstrand~\cite{Grimmett-Marstrand} sur les slabs.

\subsection{Estimées pour la distance chimique}

Le lemme qui suit est une conséquence d'un résultat intermédiaire obtenu par Antal et Pisztora~\cite{AP96}. Ce lemme contient de fait les deux théorèmes obtenus par Antal et Pisztora dans leur article.
\begin{lemme}
Il existe des constantes $\alphaa{APexp},\betaa{APexp}$ strictement positives telles que
\begin{equation}
\label{APexp}
\forall y \in \Zd \quad \E[e^{\alphaa{APexp} \1_{\{0\communique y\}}D(0,y)}]\le e^{\betaa{APexp} \|y\|_1}.
\end{equation}
\end{lemme}
\begin{proof}
L'inégalité $(4.49)$ d'Antal et Pisztora~\cite{AP96} dit qu'il existe un entier $N$ et un réel $c>0$ tels que
\begin{equation*}
\label{apcache}
\forall \ell \ge 0\quad \P(0\communique y, D(0,y)>\ell)\le \P \left(\sum_{i=0}^n (|C_{i}|+1)>\ell c N^{-d}\right),
\end{equation*}
où les $C_{i}$ sont des ensembles aléatoires indépendants de même loi, tels
qu'il existe $h>0$ avec $\E[\exp(h |C_{i}|)]<+\infty$, et où $n$ est un entier
vérifiant $n\le \|y\|_1/2N$, ce qui entraîne que $n+1\le \|y\|_1$.
Ainsi, pour tout $\ell>0$,
\begin{eqnarray*}
\label{apcachezz}
\P(0\communique y, D(0,y)\ge \ell) & \le  & \P(0\communique y, D(0,y)>\ell /2)\\
& \le & \P \left( \sum_{i=0}^n (|C_{i}|+1)>\frac \ell 2c N^{-d} \right)\\
& \le & \P \left( \sum_{i=1}^{\|y\|_1} \frac{2N^d}c (|C_{i}|+1)\ge \ell \right).
\end{eqnarray*}
Bien sûr, la dernière inégalité reste vraie si $\ell\le 0$, ce qui prouve que
$$\1_{\{0\communique y\}}D(0,y)\preceq \mu^{*\|y\|_1},$$
où $\mu$ est la loi de 
$\frac{2N^d}c (|C_{0}|+1)$ et $\preceq$ d\'esigne la comparaison stochastique.
On peut ainsi prendre $\alphaa{APexp}=h\frac{c}{2N^d}$ et $\betaa{APexp}=\log \E [\exp(h |C_{0}|+1)]$.
\end{proof}

\begin{coro}
\label{danslecube}
Il existe des constantes $\rho,\A{gecart},\B{gecart},\A{cube},\B{cube}>0$ telles que 
\begin{eqnarray}
\forall y\in\Zd \quad  \forall t\ge\rho \|y\|_1\quad  \P(0\communique y, \; D(0,y)>t)& \le &  \A{gecart}\exp(-\B{gecart} t),\label{gecart} \\
\forall r>0 \quad \P
\left(
\begin{array}{c}
\exists x,y\in \{0,\dots,r\}^d: \\
 \1_{\{x\communique y\}}D(x,y)>\rho r
\end{array}
\right)& \le & \A{cube}\exp(-\B{cube} r).\label{cube}
\end{eqnarray}
\end{coro}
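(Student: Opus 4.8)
The plan is to derive both displays from the exponential moment bound \eqref{APexp} via elementary large-deviation estimates, plus a crude counting argument for the second one.

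For \eqref{gecart}: fix $y\in\Zd$ and apply the Markov (Chernoff) inequality to the nonnegative random variable $\1_{\{0\communique y\}}D(0,y)$. For any $t>0$,
\[
\P(0\communique y,\; D(0,y)>t)\le e^{-\alphaa{APexp} t}\,\E\!\left[e^{\alphaa{APexp}\1_{\{0\communique y\}}D(0,y)}\right]\le e^{-\alphaa{APexp} t + \betaa{APexp}\|y\|_1}.
\]
Now choose $\rho=2\betaa{APexp}/\alphaa{APexp}$. Then for $t\ge \rho\|y\|_1$ we have $\betaa{APexp}\|y\|_1\le \tfrac12\alphaa{APexp} t$, so the right-hand side is at most $e^{-\alphaa{APexp} t/2}$, and \eqref{gecart} holds with $\A{gecart}=1$, $\B{gecart}=\alphaa{APexp}/2$. (Any $\rho$ at least this large works; this flexibility is why the same $\rho$ can be used in both statements.)

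For \eqref{cube}: a union bound over all ordered pairs $x,y\in\{0,\dots,r\}^d$ gives
\[
\P\!\left(\exists x,y\in\{0,\dots,r\}^d:\ \1_{\{x\communique y\}}D(x,y)>\rho r\right)\le (r+1)^{2d}\,\max_{x,y\in\{0,\dots,r\}^d}\P\!\left(x\communique y,\ D(x,y)>\rho r\right).
\]
By translation invariance $\P(x\communique y,\ D(x,y)>\rho r)=\P(0\communique y-x,\ D(0,y-x)>\rho r)$, and for $x,y\in\{0,\dots,r\}^d$ we have $\|y-x\|_1\le dr$, hence $\rho r\ge (\rho/d)\,\|y-x\|_1$. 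Here one must take $\rho$ slightly larger — it suffices that $\rho/d$ exceeds the threshold $\rho_0:=2\betaa{APexp}/\alphaa{APexp}$ from the previous paragraph; so set $\rho=d\rho_0$ (still a legitimate choice for \eqref{gecart}). Then the estimate from \eqref{gecart} applies with $t=\rho r$ to give $\P(x\communique y,\ D(x,y)>\rho r)\le e^{-\B{gecart}\rho r}$. Absorbing the polynomial factor $(r+1)^{2d}$ into the exponential (possible since $(r+1)^{2d}e^{-\B{gecart}\rho r}\le \A{cube}e^{-\B{cube} r}$ for suitable constants $\A{cube}>0$ and $0<\B{cube}<\B{gecart}\rho$, valid for all $r>0$ after adjusting $\A{cube}$) yields \eqref{cube}.

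The only mild subtlety — hardly an obstacle — is bookkeeping the constant $\rho$ so that a single value serves both inequalities: one picks $\rho$ large enough that $\rho/d$ already beats the Chernoff threshold, which costs only a factor $d$. The polynomial-to-exponential absorption in the second part is routine. No further input beyond \eqref{APexp}, translation invariance, and the union bound is needed.
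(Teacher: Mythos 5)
Your proof is correct and follows the paper's route exactly: Markov's inequality applied to the exponential moment bound \eqref{APexp} gives \eqref{gecart}, and a union bound over the $(1+r)^{2d}$ pairs together with the same Markov estimate gives \eqref{cube}. Your care in enlarging $\rho$ (to $d\rho_0$) so that $\rho r\ge \rho_0\|y-x\|_1$ holds uniformly over the cube is in fact a genuine improvement on the paper's bookkeeping, where the constant $\rho=2\betaa{APexp}/\alphaa{APexp}$ is reused unchanged in the second part and the last displayed bound $(1+r)^{2d}\exp(-\betaa{APexp}r)$ does not literally follow for $d\ge 2$, since $\|y-x\|_1$ can be as large as $dr$ — your adjustment of $\rho$ is precisely the needed fix.
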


\begin{proof}
Soit $y \in \Zd$ et $t>0$. Avec l'inégalité de Markov et le lemme précédent,
$$
\P(0\communique y, \; D(0,y)>t)  \le e^{-\alphaa{APexp}t}\E[e^{\alphaa{APexp} \1_{ \{0\communique y \} }D(0,y)}] \le e^{\betaa{APexp}\|y\|_1-\alphaa{APexp}t}.
$$
Pour obtenir (\ref{gecart}), on prend alors par exemple $\rho=2\frac{\betaa{APexp}}{\alphaa{APexp}}$.

Soient $x$ et $y$ dans $\{0,\dots,r\}^d$, et $\rho$ comme précédemment. Avec l'inégalité de Markov et le lemme précédent,
\begin{eqnarray*}
& & \P(\exists x,y\in \{0,\dots,r\}^d: \; \1_{\{x\communique y\}}D(x,y)>\rho r) \\
& \le & \sum_{x,y \in \{0,\dots,r\}^d} \P( \1_{\{x\communique y\}}D(x,y)>\rho r ) \\
& \le & \sum_{x,y \in \{0,\dots,r\}^d} \exp(-\alphaa{APexp} \rho r) \E[\exp(\alphaa{APexp} \1_{\{x\communique y\}}D(x,y))] \\
& \le & \sum_{x,y \in \{0,\dots,r\}^d} \exp(-\alphaa{APexp} \rho r) \exp(\betaa{APexp} \|y-x\|_1) 
 \le  (1+r)^{2d} \exp(-\betaa{APexp} r),
\end{eqnarray*}
ce qui termine la preuve de (\ref{cube}).
Remarquons qu'on trouve également ce corollaire comme sous-produit de la preuve
d'Antal et Pisztora chez Dembo, Gandolfi et Kesten~\cite{DGK}, lemme 2.14.
\end{proof}

\subsection{Estimées pour la distance chimique modifiée $D^*$}

On montre ici des estimées pour $D^*$ comparables à celles obtenues pour la distance chimique $D$:
\begin{lemme}
\label{apstar}
Il existe des constantes $\rho^*, \A{apstare},\B{apstare},\alphaa{controlemomexp}, \betaa{controlemomexp},\A{controlnormedeux},\B{controlnormedeux}$ telles que l'on ait pour tout $y\in\Zd$:
\begin{eqnarray}
\forall t\ge \rho^*\|y\|_1\quad \P(D^*(0,y)>t)&\le& \A{apstare}\exp(-\B{apstare} t), \label{apstare}\\ \E[e^{\alphaa{controlemomexp}D^*(0,y)}]& \le & e^{\betaa{controlemomexp}\|y\|_1}, \label{controlemomexp} \\
\label{controlnormedeux}
\|(D^*(0,y)-\rho^*\|y\|_1)^+\|_2& \le &\A{controlnormedeux}\exp(-\B{controlnormedeux}\|y\|_1).
\end{eqnarray}
\end{lemme}

\begin{proof}
Posons $\lambda=\frac1{4\rho}$ et $\rho^*=2\rho$. Comme $D^*(0,y)=D(0^*,y^*)$, on a 
\begin{eqnarray*}
\P(D^*(0,y)>t)& \le & \P(\|0^*\|_1 \ge \lambda t)+ \P( \|y^*-y\|_1 \ge \lambda t)\\
& &+\miniop{}{\sum}{\substack{ \|a\|_1\le \lambda t \\ \|b-y\|_1\le \lambda t}}  \P(\1_{\{a\communique b \}}D(a,b)>t).
\end{eqnarray*}
Les deux premiers termes de la somme se contrôlent à l'aide de~(\ref{amasinfini}). Si $t\ge \rho^* \|y\|_1$, alors pour chaque terme de la somme on a: 
$$
\|a-b\|_1\le \|y\|_1+2\lambda t\le  \frac{t}{2\rho} +2\lambda t=\frac{t}{\rho},
$$ 
ce qui permet d'utiliser le contrôle donné par~(\ref{cube}), et termine la preuve de~(\ref{apstare}). 
Maintenant, pour $y\in \Zd$ non nul, en utilisant~(\ref{apstare}), il vient
\begin{eqnarray*}
\E [e^{\alpha D^*(0,y)}] & = & 1+\int_0^{+\infty} \P(D^*(0,y)>t)\alpha e^{\alpha t}dt \\
& \le & e^{\alpha \rho^* \|y\|_1} +\int_{\rho^* \|y\|_1}^{+\infty} \A{apstare}\exp(-\B{apstare} t)\alpha e^{\alpha t}dt,
\end{eqnarray*}
ce qui donne~(\ref{controlemomexp}) en choisissant par exemple $\alphaa{controlemomexp}=\B{apstare}/2$.
De même, en utilisant encore~(\ref{apstare})
\begin{eqnarray*}
\E([(D^*(0,y)-\rho^*\|y\|_1)^+]^2)  & = &  \int_0^{+\infty} \P(D^*(0,y)-\rho^*\|y\|_1>t) 2t \ dt \\
& \le &  \A{apstare} \exp(-\B{apstare}\|y\|_1 )\int_0^{+\infty} 2t\exp(-\B{apstare} t) \ dt,
\end{eqnarray*}
ce qui donne~(\ref{controlnormedeux}).
\end{proof}

\subsection{Preuve du théorème~\ref{concentrationD}}

Nous allons voir maintenant comment le théorème~\ref{concentrationD} découle du théorème~\ref{concentrationD*}.
L'estimée~(\ref{amasfini}) contrôlant la taille des grands amas finis nous permet de montrer que sur l'événement $\{0 \communique y\}$, les quantités $D(0,y)$ et $D^*(0,y)$ co\"incident avec grande probabilité: en effet, sur l'événement $\{0\communique y, \; 0\communique \infty\}$, on a l'identité $D(0,y)=D^*(0,y)$, et on contrôle la probabilité
$$\P(0\communique y, \; 0 \not\communique \infty) \le \A{amasfini}e^{-\B{amasfini} \|y\|_1}.$$

\begin{proof}[Preuve du théorème~\ref{concentrationD}]
Commençons par la preuve de~(\ref{concenD}). Soit $y \in \Zd$.
\begin{eqnarray*}
& & \E (|D(0,y)-\mu(y)|\1_{\{0\communique y\}}) \\
 & = & 
\E (|D(0,y)-\mu(y)| \1_{\{0\communique y,0\communique\infty\}})
+ \E (|D(0,y)-\mu(y)|\1_{\{0\communique y\},0\not\communique\infty\}}).
\end{eqnarray*}
On a d'une part, avec l'inégalité de Cauchy-Schwarz et les estimées~(\ref{gecart}) et~(\ref{amasfini}),
\begin{eqnarray*}
&&  \E ( | D(0,y)-\mu(y) | \1_{\{0\communique y,0\not\communique\infty\}}) \\
&\le & \| (D(0,y)-\mu(y)) \1_{\{0\communique y\}} \|_2 \sqrt{\P(0\communique y,0\not\communique\infty)}\\
& \le &(\|D(0,y) \1_{\{0\communique y\}} \|_2+\mu(y)) \sqrt{\P(0\communique y,0\not\communique\infty)}\\
& =  & o \left( \sqrt{\|y\|_1}\log(1+\|y\|_1) \right).
\end{eqnarray*}
D'autre part, avec les estimées~(\ref{equetlesperance1}) et~(\ref{equvar}) du  théorème~\ref{concentrationD*},
\begin{eqnarray*} & &\E( | D(0,y)-\mu(y) | \1_{\{0\communique y,0\communique\infty\}})\\ & = &  \E( |D^*(0,y)-\mu(y)|\1_{\{0\communique y,0\communique\infty\}})\\
& \le & \E( | D^*(0,y)-\mu(y)| ) \\
& \le & |\E [ D^*(0,y)] -\mu(y)|+\sqrt{\Var(D^*(0,y))}\\
& \le & \C{equetlesperance1}\|y\|_1^{1/2}\log(1+\|y\|_1)+(\C{equvar}\|y\|_1\log(1+\|y\|_1))^{1/2},
\end{eqnarray*}
ce qui prouve~(\ref{concenD}).

Passons à la preuve de~(\ref{equmoderter}). Soit $y \in \Zd$. \\
Pour tout $x$ dans $ [\C{equmoderbis} (1+\log \|y\|_1),\sqrt{\|y\|_1}]$, l'estimée~(\ref{equmoderbis}) du  théorème~\ref{concentrationD*} et l'estimée~(\ref{amasfini}) assurent que 
\begin{eqnarray*}
 && \P \left( \frac{| D(0,y)-\mu(y)|}{\sqrt{\|y\|_1}}> x , \; 0 \communique y\right) \\
 & \le & \P \left( \frac{| D^*(0,y)-\mu(y)|}{\sqrt{\|y\|_1}}> x \right) +\P(0 \communique y, \; 0 \not\communique +\infty) \\
& \le & \A{equmoderbis} e^{- \B{equmoderbis} x} +  \A{amasfini}e^{-\B{amasfini} \|y\|_1},
\end{eqnarray*}
ce qui prouve~(\ref{equmoderter}), vu que $x \le \sqrt{\|y\|_1} \le \|y\|_1$.

La preuve de~(\ref{equtfa}) est standard à partir de~(\ref{equmoderter}) -- voir par exemple la preuve du théorème 3.1 dans Alexander~\cite{Alex97}.
\end{proof}

\section{Déviations modérées}

Nous allons ici montrer les résultats~(\ref{equvar}) et~(\ref{equmoder}) de concentration pour $D^*$ annoncés dans le théorème~\ref{concentrationD*}.
La preuve s'inspire évidemment de Kesten~\cite{kesten-modere}; cependant, pour pallier le défaut d'intégrabilité de la distance chimique, 
on met ici en oeuvre une procédure de renormalisation et d'approximation:
pour $t$ réel positif et $k$ dans $\Zd$, on note $\Lambda_k^t$ l'ensemble des
arêtes dont les centres sont plus proches du point $kt$ que de tout autre point du réseau
$t\Zd$ (en cas d'égalité, on utilise une règle déterministe arbitraire pour associer l'arête à une unique boîte). On dit qu'un point de $\Zd$ est dans $\Lambda_k^t$ s'il est extrémité d'une arête de $\Lambda_k^t$: ainsi, les $(\Lambda_k^t)_{k \in \Zd}$ forment une partition de $\Ed$, mais pas de $\Zd$.

Maintenant, on note $D^t(a,b)$ la distance obtenue à partir de la distance chimique $D$
comme suit: si deux points $x$ et $y$ sont à l'intérieur d'une même boîte
 $\Lambda_k^t$, on les relie par une arête supplémentaire, dite \emph{rouge}, de longueur
$Kt$, où $K$ est une constante absolue strictement supérieure à $4 \rho$. 

En prenant $t$ suffisamment grand, on s'assurera, grâce au lemme~\ref{pastropdiff} que 
$D^*(0,y)$ et $D^t(0^*,y^*)$ sont très proches.
Pour la quantité $D^t(0,y)$, on peut localiser les chemins optimaux dans une boîte déterministe, ce qui n'était pas le cas de $D^*$. On peut alors utiliser un théorème de concentration dû à Boucheron, Lugosi et Massart~\cite{MR1989444} pour contrôler les déviations de  $D^t(0,y)$ par rapport à $\E[D^t(0,y)]$:
\begin{lemme}
\label{devDt}
Pour tout $\C{devDgammax}>0$, il existe des constantes $\B{devDgammax},\gamma>0$ telles que pour tout $y$ dans $\Zd \backslash \{0\}$, \begin{equation}
\forall x \le  \C{devDgammax}\|y\|_1^{1/2} \quad  \P \left( \frac{|D^{\gamma x}(0,y)-\E D^{\gamma x}(0,y)|}{\sqrt{\|y\|_1}} \ge x \right)  \le  2\exp(- \B{devDgammax}x). \label{devDgammax}
\end{equation}
\end{lemme}

\begin{proof}
Soit $\C{devDgammax}>0$, $y \in \Zd$ et $t>0$ fixés. L'un des ingrédients de la preuve est l'existence de moments exponentiels pour $D^t(0,y)$, qui découle principalement de l'existence de moments exponentiels pour $D^*$. Comme on peut ne prendre que des arêtes rouges, on a 
\begin{equation}
\label{majDt}
D^t(0,y)\le Kt \left(\frac{\|y\|_1}t+1 \right)=K(\|y\|_1+t).
\end{equation}
Maintenant, montrons qu'il existe des constantes $\alpha, \beta, \eta$ strictement positives telles que pour tout $y \in \Zd \backslash \{0\}$ et tout $t>0$,
\begin{equation}
\label{momexpDt}
\log \E(\exp (\alpha D^t(0,y)) \le \beta \|y\|_1 +\eta t.
\end{equation}
Remarquons d'abord que l'inégalité triangulaire et l'inégalité~(\ref{majDt}) assurent que
\begin{eqnarray*}
D^t(0,y) 
& \le & D^t(0,0^*) + D^t(0^*,y^*) + D^t(y^*,y) \\
& \le & K( 2t + \|0-0^*\|_1 + \|y-y^*\|_1) + D^*(0,y).
\end{eqnarray*}
En utilisant l'inégalité de H\"older, on obtient
$$\E(\exp (\alpha D^t(0,y)) \le \exp(2\alpha K t)[\E \exp(3 \alpha K\|0-0^*\|_1)]^{2/3}[\E \exp(3 \alpha D^*(0,y))]^{1/3}.$$
Les inégalités~(\ref{amasinfini}) et~(\ref{controlemomexp}) permettent alors de conclure la preuve de~(\ref{momexpDt}).

D'après~(\ref{majDt}), tout chemin qui réalise $D^t(0,y)$ reste à l'intérieur d'une boîte finie déterministe, ce qui entraîne que $D^t(0,y)$ ne dépend que du contenu d'une famille finie de boîtes mésoscopiques que l'on numérotera de $1$ à $N$.
Soient $U_1,\dots, U_N$ les vecteurs aléatoires tels que $U_i$ contienne  les états des arêtes du graphe microscopique qui sont dans la boîte $i$.
Il existe une fonction $S=S_{y,t}$ telle que
$$D^t(0,y)=S(U_1,\dots,U_N).$$
Notons que les $(U_i)$ sont indépendants.
Soient $U'_1,\dots, U'_N$ des copies indépendantes des $U_1,\dots,U_N$;
posons $S^{(i)}=S(U_1,\dots,U_{i-1},U'_i,U_{i+1},\dots, U_N)$ et enfin
\begin{eqnarray*}
V_{+} & = & \E \left[ \sum_{i=1}^N (( S-S^{(i)})_{+})^2| U_1,\dots,U_N \right], \\
V_{-} & = & \E \left[ \sum_{i=1}^N (( S-S^{(i)})_{-})^2| U_1,\dots,U_N \right].
\end{eqnarray*}
On peut déjà noter, avec l'inégalité de Efron-Stein-Steele (voir Efron-Stein~\cite{MR615434} et Steele~\cite{MR840528} ou la proposition 1 dans Boucheron, Lugosi et Massart~\cite{MR1989444}) que
\begin{equation}
\Var D^t(0,y)\le\E[V_-].
\label{ESS}
\end{equation}
De plus, le théorème~2 de Boucheron, Lugosi et Massart~\cite{MR1989444} donne les inégalités de concentration suivantes:
pour tous $\lambda, \theta>0$ tels que $\lambda\theta<1$:
\begin{eqnarray}
\log\E[\exp(\lambda(S-\E[S]))] & \le & \frac{\lambda\theta}{1-\lambda\theta}\log\E \left[ \exp \left(\frac{\lambda V_+}{\theta}\right) \right], 
\label{BouLuMa+}\\
\log\E[\exp(-\lambda(S-\E[S]))] & \le & \frac{\lambda\theta}{1-\lambda\theta}\log\E \left[ \exp \left(\frac{\lambda V_-}{\theta}\right) \right].
\label{BouLuMa-}
\end{eqnarray}
On note $M^t(y,z)$ le plus court chemin pour $D^t$ entre $y$ et $z$, que l'on choisit, en cas d'ambigu\"ité, suivant une règle déterministe fixée.
Si l'on note $R_i$ l'événement: ``$M^t(0,y)$
passe par la boîte numéro $i$'', on  remarque que
$S^{(i)}-S\le Kt \1_{R_i}$.
Ainsi, $V_-$ est majoré par $K^2 t^2 Y$, où $Y$ est le nombre
de boîtes visitées par $M^t(0,y)$.
Comme $Y\le 3^d(1+D^t(0,y)/t)$, on obtient finalement
\begin{equation}
V_-\le 3^dK^2 t(D^t(0,y)+t) \label{majV-}.
\end{equation}
Cette inégalité et l'existence de moments exponentiels pour $D^t(0,y)$ vont nous permettre de contrôler les déviations dans un sens. Par contre, pour l'autre sens, nous ne pouvons majorer aussi simplement $V_+$, et nous utiliserons une variation de l'inégalité~(\ref{BouLuMa+}) qui nous a été communiquée par R. Rossignol et M. Théret.
\begin{lemme}
On suppose qu'il existe $\delta>0$, des fonctions r\'eelles $(\phi_i)_{1\le i\le n},(\psi_i)_{1\le i\le n}$, $(g_i)_{1\le i\le n}$  telles que pour tout $i$ on a
$$(S-S^{(i)})_-\le\psi_i(U'_i)\text{ et }(S-S^{(i)})_-^2\le\phi_i(U'_i)g_i(U_1,\dots,U_n)$$
et $\alpha_i=\E[e^{\delta\psi_i(U_i)}\phi_i(U_i)]<+\infty$.
Si on pose 
$$W=\sum_{i=1}^n \alpha_ig_i(U_1,\dots,U_n),$$
alors pour tout $\theta>0$ et tout $\lambda\in[0,\min(\delta,\frac1\theta))$, on a
\begin{eqnarray}
\label{boulimatata}
\log\E[\exp(\lambda(S-\E[S]))] & \le & \frac{\lambda\theta}{1-\lambda\theta}\log\E \left[ \exp \left(\frac{\lambda W}{\theta}\right) \right].
\end{eqnarray}
\end{lemme}
On a déjà remarqué que $S^{(i)}-S\le Kt\1_{R_i}$, ce qui donne les inégalités voulues avec $\phi_i=\psi_i=Kt$, $g_i=\1_{R_i}$ et $\alpha_i=Kt e^{\delta Kt}$; on obtient
$$W=K^2t^2 e^{\delta K t}Y \le 3^dK^2t e^{\delta K t}3^d(D^t(0,y)+t).$$
Afin de retrouver une inégalité analogue à~(\ref{majV-}), on choisit alors $\delta=1/t$:
\begin{equation}
\label{majW}
\max(V_-,W) \le 3^d K^2 e^{K} t(D^t(0,y)+t).
\end{equation}
Nous allons ainsi pouvoir traiter simultanément les deux termes, en prenant cependant en compte les deux conditions: $\lambda <1/\theta$ et $\lambda<1/t$. On prend les  constantes $\alpha,\beta,\eta$ donnée par~(\ref{momexpDt}), et on choisit
$\lambda,t>0$ tels que
$$\left\{ 
\begin{array}{l}
\lambda<1/t, \\
\lambda^2\le \frac12 \frac{\alpha}{ 3^dK^2 e^{K}t}.
\end{array}
\right.$$
En posant 
$\theta=\frac{\lambda 3^d K^2 e^{K} t}{\alpha}$, on a bien la condition $\theta\lambda\le 1/2$, ce qui permet d'utiliser les inégalités~(\ref{BouLuMa-}) et~(\ref{boulimatata}).
Comme $\frac{\lambda}{\theta}3^d K^2e^{K} t=\alpha$, on a, avec les estimées~(\ref{majW}) et~(\ref{momexpDt}),
\begin{eqnarray} 
\frac{\lambda\theta}{1-\lambda\theta}\log\E\left[\exp\left(\frac{\lambda V_-}{\theta}\right)\right] 
&\le & 2\lambda\theta\log\E[\exp(\alpha(D^t(0,y)+t))] \nonumber \\
& \le & \frac{2.3^d K^2 e^{K}}{\alpha} \lambda^2t(\beta\|y\|_1+(\eta+\alpha)t) \nonumber \\
& \le & L\lambda^2 (t\|y\|_1+t^2) \label{majorL}
\end{eqnarray}
pour toute constante $L \ge \frac{2.3^d K^2 e^{K}}{\alpha} \max(\beta,\eta+\alpha)$ et $L \ge \C{devDgammax}/2$ -- cette derni\`ere condition sera utilis\'ee a la fin de la preuve. Fixons une telle constante $L$. La même majoration donne aussi
\begin{equation}
\frac{\lambda\theta}{1-\lambda\theta}\log\E\left[\exp\left(\frac{\lambda W}{\theta}\right)\right] \le L\lambda^2 (t\|y\|_1+t^2).
\label{majorelle}
\end{equation}
Ainsi, pour tout $u>0$ et tous $\lambda,t>0$ tel que $\lambda <1/t$ et $\lambda^2\le \frac12 \frac{\alpha}{3^dK^2 e^K t}$, l'inégalité de Markov et les inégalités~(\ref{BouLuMa-})  et~(\ref{boulimatata}) assurent que
$$
\P \left( |D^t(0,y) -\E [D^t(0,y)]| >u \right)\le 2\exp(-\lambda u+L\lambda^2 (t\|y\|_1+t^2)).$$
Maintenant, en prenant 
$\displaystyle \lambda=\frac{x\|y\|_1^{1/2}}{2Lt(\|y\|_1+t)}<\frac{x}{2Lt\|y\|_1^{1/2}}$.
on~a
$$
\left\{ \begin{array}{l}
x \le 2L  \|y\|_1^{1/2}, \\
x^2 \le \frac{2\alpha}{3^d}\frac{L^2}{K^2 e^K}t\|y\|_1
\end{array} \right. 
\Rightarrow 
\left\{ \begin{array}{l}
\lambda<1/t, \\
\lambda^2\le \frac12 \frac{\alpha}{ 3^dK^2 e^{K}t}
\end{array} \right. 
$$
et il vient, en prenant $u=x\sqrt{\|y\|_1}$,
$$\P \left( \frac{|D^t(0,y)-\E[D^t(0,y)]|}{\sqrt{\|y\|_1}}\ge x \right) \le 2\exp \left( -\frac{x^2\|y\|_1}{4L t(\|y\|_1+t)}\right).$$
En prenant finalement $t=\gamma x$ avec $\gamma=\frac{3^d}{ \alpha} \frac{K^2 e^K}{L}$, on voit que
$$ x\le  2L\|y\|_1^{1/2} \Rightarrow \left\{ \begin{array}{l}
\lambda<1/t, \\
\lambda^2\le \frac12 \frac{\alpha}{ 3^dK^2 e^{K}t}
\end{array} \right. 
$$
et on obtient, pour  $x\le  \C{devDgammax}\|y\|_1^{1/2}\le  2L\|y\|_1^{1/2}$:
 $$\P \left( \frac{|D^{\gamma x}(0,y)-\E[ D^{\gamma x}(0,y) ] | }{\sqrt{\|y\|_1}}> x \right) \le 2\exp\left(-\frac{x}{4L\gamma  }\times \frac{1}{1+2L \gamma }\right),$$
ce qui termine la preuve du lemme~\ref{devDt}.
\end{proof}

\begin{lemme}
\label{pastropdiff}
Il existe des constantes
$\A{presquepareil},\B{presquepareil},\A{esperance},\B{esperance}$ strictement positives telles que pour tout $y\in\Zd$ et tout $t\le\|y\|_1$, on ait
\begin{eqnarray}
\P(D^t(0^*,y^*)\ne D^*(0,y)) & \le & \A{presquepareil} (1+\|y\|_1)^{2d} \exp(-\B{presquepareil} t), \label{presquepareil} \\
\|D^*(0,y) -  D^t(0^*,y^*)\|_2 & \le & \A{esperance}(1+\|y\|_1)^{d+1}\exp(-\B{esperance} t).\label{esperance}
\end{eqnarray}
\end{lemme}

\begin{proof}
On pose $\Gamma=\{x\in\Zd:\;\|x\|_1\le 3 \rho^*\|y\|_1)\}$.
Notons 
\begin{eqnarray*}
L & = & \{M^t(0^*,y^*)\subset \Gamma\}, \\
A  & = & \miniop{}{\cap}{a,b\in \Gamma:\; \|a-b\|_1\ge t}\{\1_{\{a\communique b\}}D(a,b)\le 2\rho\|a-b\|_1\}, \\
B & = & \miniop{}{\cap}{a\in \Gamma}\{a\in C_\infty\text{ ou }C(a)\subset [-t,\dots,t]^d\}.
\end{eqnarray*}
Disons que deux boîtes $\Lambda_k^t$ et $\Lambda_{\ell}^t$ sont $*$-adjacentes si $\|k-\ell\|_{\infty}=1$.
Pour $k\in\Zd$, on dit qu'une boîte $\Lambda_k^t$ est bonne si quel que soit le point $x$ dans la boîte $\Lambda_k^t$, quel que soit le point $y$ dans  la boîte $\Lambda_k^t$ ou dans  l'une des
$3^d-1$ boîtes $*$-adjacentes, si $x$ et $y$ communiquent, alors ils sont reliés par un chemin ouvert
dont la longueur ne dépasse pas $4\rho t$; notons encore
$$G=\miniop{}{\cap}{\|k\|_1\le 1+3\rho^*\|y\|_1/t}\{\Lambda^t_k\text{ est bonne.}\}.$$ 
Nous allons montrer que 
\begin{equation}
\label{labelleinclusion}
L \cap A\cap B\cap  G \subset \{D^t(0^*,y^*)=D^*(0,y)\}.
\end{equation}
Plaçons-nous sur l'événement $L \cap A\cap B\cap  G$ et considérons $M^t(0^*,y^*)$, le chemin minimal pour $D^t$ entre $0^*$ et $y^*$, inclus, grâce à l'événement $L$, dans la boîte $\Gamma$. 
Ce chemin est fait de trois sortes de portions: des suites d'arêtes rouges,
des suites d'arêtes du cluster infini et des suites d'arêtes de clusters finis. 
Pour passer d'un cluster fini au cluster infini, on doit nécessairement utiliser une arête rouge.
Soit $0^*=y_0,\dots, y_n=y^*$ la suite des points de la trajectoire $M^t(0^*,y^*)$ et soit 
$$i_0=\max \{i: \;D(0^*,y_i)=D^t(0^*,y_i)\}.$$
Pour montrer~(\ref{labelleinclusion}), il suffit de montrer que $i_0=n$.
Comme 
$$D(0^*,y_{i_0})=D^t(0^*,y_{i_0})\le D^t(0^*,y^*)\le D^*(0,y)<+\infty,$$
 le point $y_{i_0}$
est dans l'amas infini.

Supposons par l'absurde que $i_0<n$: l'arête entre $y_{i_0}$ et $y_{i_0+1}$ n'est pas une
arête ouverte du graphe $\Zd$, sinon cela contredirait la maximalité de $i_0$.
C'est donc une arête supplémentaire de longueur $Kt$ rajoutée entre deux
points d'une même boîte $\Lambda_k^t$, ou arête rouge: ainsi, $D^t(y_{i_0},y_{i_0+1})=Kt$.
Si $y_{i_0}$ et $y_{i_0+1}$ communiquent dans le graphe aléatoire, alors l'événement $G$ assure que
$D(y_{i_0},y_{i_0+1})\le 4 \rho t \le Kt$. On a alors $D(y_{i_0},y_{i_0+1})\le D^t(y_{i_0},y_{i_0+1})$, ce qui
contredit encore  la maximalité de $i_0$. Ainsi,
$y_{i_0}$ et $y_{i_0+1}$ ne communiquent pas, ce qui veut dire que
$y_{i_0+1}$ n'est pas dans le cluster infini.
Soit 
$$j_0=\inf\{j\in [i_0+1,\dots,n]: \; y_j\in C_{\infty}\}.$$ 
Remarquons que $j_0<+\infty$ car $y_n\in C_\infty$.
Le chemin entre $y_{i_0}$ et $y_{j_0}$ utilise alternativement des arêtes rouges et
des morceaux de clusters finis. Regardons ce chemin à l'échelle mésoscopique, c'est-à-dire que l'on considère le chemin composé des coordonnées des boîtes de taille $t$ visitées par le chemin.
On dit qu'un site utilisé par le chemin mésoscopique est rouge si la portion du chemin correspondant à la traversée de la boîte correspondante comprend
une arête rouge. Deux sites rouges consécutifs de ce chemin ne peuvent
être séparés de plus d'un site car il n'y a pas de cluster fini reliant
des boîtes non adjacentes (on est dans l'événement $B$). Ainsi plus de la moitié des sites du chemin 
mésoscopique sont rouges; remarquons aussi que l'arête joignant $y_{i_0}$ (dans l'amas infini) et $y_{i_0+1}$ (dans un amas fini) est nécessairement rouge. Il s'ensuit que
$$D(y_{i_0},y_{j_0})\ge D^t(y_{i_0},y_{j_0})\ge \frac{\|y_{i_0}-y_{j_0}\|_1}t \times\frac12\times  Kt=\frac{K}2\|y_{i_0}-y_{j_0}\|_1.$$
Si  $y_{i_0}$ et $y_{j_0}$ ne sont pas dans des boîtes $*$-adjacentes,
alors on a à la fois $\|y_{i_0}-y_{j_0}\|_1 \ge t$ et $D(y_{i_0},y_{j_0})\ge \frac{K}2\|y_{i_0}-y_{j_0}\|_1>2 \rho \|y_{i_0}-y_{j_0}\|_1$ , ce qui ne peut arriver puisque $A$ est réalisé.
Les points $y_{i_0}$ et $y_{j_0}$ sont donc dans des boîtes $*$-adjacentes. Mais alors l'événement $G$ assure que
$D(y_{i_0},y_{j_0})\le 4\rho t\le Kt\le D^t(y_{i_0},y_{j_0})$, ce qui contredit
encore la maximalité de $i_0$ et prouve l'inclusion~(\ref{labelleinclusion}).

Ainsi 
$\P(D^*(0,y)\ne D^t(0,y))\le \P(L^c)+\P(A^c)+\P(B^c)+\P(G^c)$. 

Nous allons maintenant contrôler les probabilités $\P(L^c)$, $\P(A^c)$, $\P(B^c)$ et $\P(G^c)$. 

Pour majorer $\P(L^c)$,  remarquons que comme $K\ge 1$, tout point de $M^t(0^*,y^*)$ est à une distance (en norme $\|.\|_1$) plus petite que
$D^t(0^*,y^*)$ de $0^*$. Comme $D^t(0^*,y^*)\le D^*(0,y)$, on a alors
\begin{eqnarray*}
\P(L^c)& = & \P(M^t(0^*,y^*)\not\subset  \Gamma)\\
& \le & \P \left( \|0-0^*\|_1 \ge  \rho^* \|y\|_1 \right)  + \P \left( D^*(0,y)\ge 2\rho^*\|y\|_1 \right)\\
 & \le & \A{amasinfini}\exp(-\B{amasinfini} \|y\|_1)+\A{apstare}\exp(-2\B{apstare} \rho^*\|y\|_1), 
\end{eqnarray*}
en utilisant les estimées (\ref{amasinfini}) et (\ref{apstare}).
Avec l'estimée~(\ref{gecart}),
\begin{eqnarray*}
\P(A^c) & = & \sum_{a,b\in \Gamma:\; \|a-b\|_1\ge t}\P( D(a,b)\ge 2\rho\|a-b\|_1) \\
& = & \sum_{a,b\in \Gamma:\; \|a-b\|_1\ge t} \A{gecart} \exp(-\B{gecart} 2\rho\|a-b\|_1) \\
& = & (1+2\rho_2\|y\|_1)^{2d} \exp(-2\B{gecart} \rho t).
\end{eqnarray*}
Avec l'estimée~(\ref{amasfini}),
$\P(B^c)  =  (1+3\rho^*\|y\|_1)^{d} \A{amasfini}\exp(-\B{amasfini} t)$.\\
Finalement, avec l'estimée~(\ref{cube}),
$\P(G^c)\le (1+3\rho^*\|y\|_1/t)^{d} \A{cube}\exp(-\B{cube} t)$. \\
On obtient alors l'estimée~(\ref{presquepareil}) en se rappelant $ t \le \|y\|_1$.

Pour le dernier point, remarquons que
$$
0\le D^*(0,y)-D^t(0^*,y^*)\le\rho^*\|y\|_1\1_{\{D^*(0,y)\ne D^t(0^*,y^*)\}}+(D^*(0,y)-\rho^*\|y\|_1)^+.
$$
Ainsi,
\begin{eqnarray*}
& &\|D^*(0,y)-D^t(0^*,y^*)\|_2\\ & \le & \rho^*\|y\|_1\sqrt{\P(D^*(0,y)\ne D^t(0^*,y^*))}+\|(D^*(0,y)-\rho^*\|y\|_1)^+\|_2,
\end{eqnarray*}
et on conclut avec~(\ref{presquepareil}) et~(\ref{controlnormedeux}), en utilisant encore une fois que $\|y\|_1\ge t$.
\end{proof}

Nous pouvons maintenant passer aux preuves des estimées~(\ref{equvar}) et~(\ref{equmoder}) du théorème~\ref{concentrationD*}. L'idée en est simple: on utilise les estimées obtenues pour $D^t$ dans le lemme~\ref{devDt}, tout en contrôlant, avec le lemme~\ref{pastropdiff}, l'erreur d'approximation entre $D^*$ et $D^t$.

\begin{proof}[Preuve du théorème~\ref{concentrationD*}, estimée~(\ref{equvar})]
On peut écrire
\begin{eqnarray*}
& &\Var D^*(0,y)\\ & \le & 2(\Var D^t(0,y)+\Var(D^*(0,y)-D^t(0,y))\\
& \le &2\Var D^t(0,y)+4 \left( \E(D^*(0,y)-D^t(0^*,y^*))^2 +\E(D^t(0^*,y^*) -D^t(0,y))^2\right).
\end{eqnarray*}
On prend $t=\frac{d+1}{\B{esperance}}\log(1+\|y\|_1)\le \|y\|_1$ pour $\|y\|_1$ assez grand.
\begin{itemize}
\item Comme~(\ref{controlnormedeux}) assure que
$\E[D^*(0,y)]=O(\|y\|_1)$, en utilisant les inégalités~(\ref{ESS}) et~(\ref{majV-}), il vient:
$$\Var D^t(0,y) \le 3^dK^2(t\E[D^*(0,y)]+t^2)=O(\|y\|_1 \log (1+\|y\|_1)).$$
\item L'inégalité~(\ref{esperance}) assure que 
\begin{eqnarray*}
\E(D^*(0,y)-D^t(0^*,y^*))^2 \le \A{esperance}^2 (1+\|y\|_1)^{2d+2}\exp(-2\B{esperance} t)=O(1).
\end{eqnarray*}
\item L'inégalité triangulaire pour $D^t$ puis l'inégalité~(\ref{majDt})assurent que 
\begin{eqnarray*}
|D^t(0^*,y^*) -D^t(0,y)| & \le & D^t(0,0^*)+D^t(y,y^*) \\
& \le & K(\|0^*\|_1 +\|y-y^*\|_1 +2t).
\end{eqnarray*}
L'inégalité de Minkowski et l'estimée~(\ref{amasinfini}) assurent alors que 
$$\|D^t(0^*,y^*) -D^t(0,y)\|_2 =O(\log (1+\|y\|_1)),$$
\end{itemize}
ce qui conclut la preuve de l'estimée~(\ref{equvar}) du théorème~\ref{concentrationD*}.
\end{proof}

\begin{proof}[Preuve du théorème~\ref{concentrationD*}, estimée~(\ref{equmoder})]
Remarquons tout d'abord qu'il suffit de prouver l'estimée pour tout $y$ suffisamment grand. 
Soit $\D{equmoder}>0$. Prenons le $\gamma$ donné par le lemme~\ref{devDt} avec $\C{devDgammax}=\D{equmoder}$. 
On pose 
$$\C{equmoder}=\max\left\{1,\frac {4d}{\gamma\B{presquepareil}},\frac{d+1}{\gamma \B{esperance}}\right\}.$$
Pour tout $y\in\Zd\backslash\{0\}$, si  $x\le \D{equmoder}\sqrt{\|y\|_1}$ et $x\ge \C{equmoder}(1+\log \|y\|_1)$,  alors, d'après~(\ref{esperance}) et~(\ref{majDt}),
\begin{eqnarray*}
&& |\E[D^*(0,y)]-\E [D^{\gamma x}(0,y) ]| \\
& \le & |\E [D^*(0,y)]-\E [D^{\gamma x}(0^*,y^*) ]| + |\E[ D^{\gamma x}(0^*,y^*)]-\E[ D^{\gamma x}(0,y) ] | \\
& \le &|\E [D^*(0,y)]-\E [D^{\gamma x}(0^*,y^*)] | +\E[ D^{\gamma x}(0,0^*)]+\E[ D^{\gamma x}(y,y^*)] \\
& \le & \A{esperance} (1+\|y\|_1)^{2d+1}\exp(-\B{esperance}\gamma \C{equmoder} (1+\log\|y\|_1))  \\
& & \quad \quad +K\E(\|0^*\|_1)+K\E(\|y-y^*\|_1)+2K\gamma x.
\end{eqnarray*}
Avec~(\ref{amasinfini}), on sait que $\E(\|0^*\|_1)=\E(\|y-y^*\|_1)<+\infty$. Donc, comme $\C{equmoder} \ge \frac{2d+1}{\gamma \B{esperance}}$, pour $y$ assez grand, si  $x\le\D{equmoder}\sqrt{ \|y\|_1}$ et $x\ge \C{equmoder}(1+\log \|y\|_1)$, on a
$$\frac{|\E [D^*(0,y)]-\E [D^{\gamma x}(0,y) ]|}{\sqrt{\|y\|_1}}  \le x/2,$$
ce qui entraîne que
\begin{eqnarray*}
&& \P \left( \frac{| D^*(0,y)-\E [D^*(0,y)]|}{\sqrt{\|y\|_1}}> x \right) \\
& \le & \P(D^*(0,y)\ne D^{\gamma x}(0^*,y^*))+\P \left(\frac{| D^{\gamma x}(0^*,y^*)-\E [D^{\gamma x}(0,y)]|}{\sqrt{\|y\|_1}}> x/2\right).
\end{eqnarray*}
Vu que $x\ge \C{equmoder}(1+\log \|y\|_1)$ et $\C{equmoder}\ge \frac {4d}{\B{presquepareil}\gamma}$, l'estimée~(\ref{presquepareil}) assure que
\begin{eqnarray*}
\P(D^*(0,y)\ne D^{\gamma x}(0^*,y^*)) & \le & \A{presquepareil} (1+\|y\|_1)^{2d} \exp(-\B{presquepareil} \gamma x) \\
& \le & \A{presquepareil}2^{2d} \exp \left( -\frac{\B{presquepareil}}2 \gamma x \right),
\end{eqnarray*}
Pour traiter le second terme, on écrit d'abord, avec l'estimée~(\ref{majDt}):
\begin{eqnarray*}
&& | D^{\gamma x}(0^*,y^*)-\E [D^{\gamma x}(0,y)]| \\
& \le & | D^{\gamma x}(0,y)-\E[ D^{\gamma x}(0,y)]| + D^{\gamma x}(0,0^*) +D^{\gamma x}(y,y^*) \\
& \le & | D^{\gamma x}(0,y)-\E [D^{\gamma x}(0,y)]| + K \|0^*\|_1+ K \|y-y^*\|_1+ 2K\gamma x
\end{eqnarray*}
Ainsi, pour tout $y$ assez grand,
\begin{eqnarray*}
&& \P \left(\frac{| D^{\gamma x}(0^*,y^*)-\E [D^{\gamma x}(0,y)] |}{\sqrt{\|y\|_1}}> x/2\right) \\
& \le & \P \left(\frac{| D^{\gamma x}(0,y)-\E [D^{\gamma x}(0,y)] |}{\sqrt{\|y\|_1}}> x/9\right)+ 2\P \left(\|0^*\|_1 \ge\frac{x\sqrt{\|y\|_1}}{9K} \right)
\end{eqnarray*}
Le lemme~\ref{devDt} permet de contrôler le premier terme, tandis que l'estimée~(\ref{amasinfini}) contrôle le second, 
ce qui donne le résultat voulu.
\end{proof}


\section{Comportement asymptotique de l'espérance}

Notre but ici est de démontrer l'estimée~(\ref{equetlesperance1}) du théorème~\ref{concentrationD*}.

La fonctionnelle $D^*$ hérite de la sous-additivité de $D$. On peut également remarquer que pour tout $a$ dans $\Zd$, la loi de $(D^*(x+a,y+a))_{x \in\Zd,y \in \Zd}$ ne dépend pas de $a$.
Ainsi, si on pose $h(x)=\E[D^*(0,x)]$, on a 
$$\forall x,y\in\Zd\quad h(x+y)\le h(x)+h(y).$$
La sous-additivité de $h$ permet de mettre en oeuvre les techniques développées par Alexander~\cite{Alex97} pour l'approximation des fonctions sous-additives. Commençons par démontrer la convergence la convergence de $h(ny)/n$ vers $\mu(y)$:

\begin{lemme}
\label{limite}
Pour tout $y\in\Zd$, la suite $\frac{D^*(0,ny)}n$ converge presque sûrement et dans $L^1$ vers $\mu(y)$.
En particulier $h(ny)/n$ converge vers $\mu(y)$.
\end{lemme}

\begin{proof}
Comme $D^*(0,y)$ est intégrable et $(D^*(x,y))_{x \in\Zd,y \in \Zd}$ est stationnaire, le théorème ergodique sous-additif nous dit
qu'il existe $\mu^*(y)$ tel que $D^*(ny)/n$ converge presque sûrement et dans $L^1$ vers $\mu^*(y)$. Il reste donc juste à voir que $\mu^*(y)$ coïncide avec $\mu(y)$. 
Or, d'après Garet et Marchand~\cite{GM-fpppc}, sur l'événement $\{0\communique\infty\}$, on peut presque sûrement trouver une suite de points $(n_ky)_{k\ge 1}$ avec $0\communique n_k y$ et $D(0,n_k y)/n_k$ converge vers $\mu(y)$ lorsque $k$ tend vers l'infini.
Bien sûr,  $D^*(0,n_k y)/n_k$ converge vers $\mu^*(y)$. Comme sur $\{0 \communique \infty\}$ on a l'égalité $D(0,n_k y)=D^*(0,n_k y)$, ceci entraîne $\mu(y)=\mu^*(y)$.
\end{proof}

Nous allons rappeler les résultats d'Alexander sur l'approximation des fonctions sous-additives.
Introduisons quelques notations dérivées de celles d'Alexander~\cite{Alex97}.
Pour  $M$ et $C$ des constantes positives, on note
$$
GAP(M,C)=\left\{
\begin{array}{c}
h:\Zd\to\R, \\
(\|x\|_1\ge M)\Rightarrow \left( \mu(x) \le h(x)\le \mu(x)+C\|x\|_1^{1/2}\log \|x\|_1 \right)
\end{array}
 \right\}.
$$
Pour $x \in \Rd$, on choisit une forme linéaire $\mu_x$  de $\Rd$ telle que $\mu_x(x)=\mu(x)$
et telle~que
$$\forall y\in \mathcal B_{\mu}^0(\mu(x))\quad \mu_x(y)\le\mu(x).$$
La grandeur $\mu_x(y)$ est la longueur pour $\mu$ de la projection de $y$ sur la droite passant par $0$ et $x$ suivant un hyperplan d'appui au convexe $\mathcal B_{\mu}^0(\mu(x))$ au point $x$.
Il est alors facile de voir que pour tout $y \in \Rd$, $|\mu_x(y)|\le \mu(y)$. On pose alors, pour $C$ constante positive, 
$$Q_x^h(C)=
\left\{
\begin{array}{c}
y\in\Zd:\|y\|_1\le (2d+1)\|x\|_1, \\
 \mu_x(y)\le\mu(x), \; h(y)\le\mu_x(y)+C\|x\|_1^{1/2}\log\|x\|_1
\end{array}
\right\}.$$
L'idée est que les éléments de $Q_x^h(C)$ permettent de réaliser un maillage de $\Zd$ avec des pas pour lesquels $\mu_x$ approche correctement $h$. 
On définit encore, pour $M>0,C>0,a>1$,
$$CHAP(M,C,a)=
\left\{
\begin{array}{c}
h:\Zd\to\R,\;(\|x\|_1\ge M)\Rightarrow
\left(
\begin{array}{c}
\exists \alpha \in[1,a], \\
x/\alpha\in \text{Co}( Q_x^h(C))
\end{array}
\right)
\end{array}
\right\},
$$
où $\text{Co}(A)$ désigne l'enveloppe convexe de $A$ dans $\Rd$. Les résultats d'Alexander sont les suivants:

\begin{lemme}[Alexander~\cite{Alex97}]
\label{lemalex}
Soit $h$ une fonction positive sous-additive sur $\Zd$ et $M>1,C>0,a>1$ des constantes fixées. On suppose que pour tout $x\in\Zd$ avec $\|x\|_1 \ge M$, il existe un entier $n$, un chemin $\gamma$ de $0$ à $nx$ et une suite de points $0=v_0,v_1,\dots,v_m=nx$ de $\gamma$ tels que $m\le an$ et dont les incréments $v_i-v_{i-1}$ sont tous dans $Q_x^h(C)$. Alors
$h\in CHAP(M,C,a)$
\end{lemme}

\begin{theorem}[Alexander~\cite{Alex97}]
\label{Alex2}
Soit $h$ une fonction positive sous-additive sur $\Zd$ et $M>1,C>0,a>1$ des constantes fixées. 
Si $h\in CHAP(M,C,a)$, alors $h\in GAP(M,C)$.
\end{theorem}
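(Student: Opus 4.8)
The plan is to prove the two inequalities appearing in the definition of $GAP(M,C)$ separately: the lower bound $\mu(x)\le h(x)$ is immediate, and the upper bound $h(x)\le\mu(x)+C\|x\|_1^{1/2}\log\|x\|_1$ for $\|x\|_1\ge M$ is the substance of Alexander's argument, carried out by an iteration over a geometrically decreasing sequence of scales.

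\textbf{The lower bound.} For every $x\in\Zd$ the sequence $n\mapsto h(nx)$ is subadditive, so by Fekete's lemma $h(nx)/n\to\inf_{n\ge1}h(nx)/n\le h(x)$. In the situation of interest ($h=\E[D^*(0,\cdot)]$) the limit equals $\mu(x)$ by Lemma~\ref{limite}, hence $\mu(x)\le h(x)$ for all $x$, in particular for $\|x\|_1\ge M$.

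\textbf{The upper bound: one peeling step.} Fix $x$ with $\|x\|_1\ge M$. Since $h\in CHAP(M,C,a)$ there is $\alpha\in[1,a]$ with $x/\alpha\in\text{Co}(Q_x^h(C))$, and by Carath\'eodory one may write $x/\alpha=\sum_{i=0}^{d}\lambda_iy_i$ with $y_i\in Q_x^h(C)$, $\lambda_i\ge0$, $\sum_i\lambda_i=1$; then $x=\sum_i(\alpha\lambda_i)y_i$ with $\sum_i\alpha\lambda_i=\alpha\le a$. Rounding the coefficients $\alpha\lambda_i$ to nonnegative integers $n_i$ with $\sum_in_i\le a$, set $v=\sum_in_iy_i$ and $x'=x-v$. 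Using subadditivity, then the mesh bounds $h(y_i)\le\mu_x(y_i)+C\|x\|_1^{1/2}\log\|x\|_1$, then the linearity of $\mu_x$ (so $\sum_in_i\mu_x(y_i)=\mu_x(v)=\mu(x)-\mu_x(x')$), one gets
\begin{align*}
h(x)&\le\sum_in_i\,h(y_i)+h(x')\\
&\le\sum_in_i\,\mu_x(y_i)+\Big(\sum_in_i\Big)C\|x\|_1^{1/2}\log\|x\|_1+h(x')\\
&=\mu_x(v)+\Big(\sum_in_i\Big)C\|x\|_1^{1/2}\log\|x\|_1+h(x')\\
&\le\mu(x)+a\,C\|x\|_1^{1/2}\log\|x\|_1+\big(h(x')-\mu_x(x')\big).
\end{align*}
The first two summands are what one wants; the term $h(x')-\mu_x(x')$ is to be absorbed recursively, applying the same recipe at scale $x'$ when $\|x'\|_1\ge M$ and the elementary bound $h(x')\le K\|x'\|_1$ ($K$ a fixed subadditivity constant) otherwise. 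If the successive scales $\|x\|_1\ge\|x'\|_1\ge\|x''\|_1\ge\cdots$ decay geometrically, the errors $C\|\cdot\|_1^{1/2}\log\|\cdot\|_1$ sum to $O\big(C\|x\|_1^{1/2}\log\|x\|_1\big)$, the supporting-functional parts telescope to at most $\mu(x)$, and the finitely many bottom scales (of size $\asymp M$) contribute only an additive constant, reabsorbed into $C\|x\|_1^{1/2}\log\|x\|_1$ for $\|x\|_1\ge M$ — at the price, on a first pass, of a larger constant; recovering the exact $GAP(M,C)$ requires Alexander's careful bookkeeping (and is the reason the mesh step is calibrated precisely to $C\|x\|_1^{1/2}\log\|x\|_1$ relative to the geometry of $\mathcal{B}_{\mu}^0(\mu(x))$).

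\textbf{The main obstacle.} The delicate point — the heart of Alexander's argument — is to arrange the decomposition $x=v+x'$ so that simultaneously (i) $\|x'\|_1\le\rho\|x\|_1$ with $\rho<1$ depending only on $d$ and $a$, (ii) $\mu_x(v)$ is close to $\mu(x)$, so the linear parts telescope without overshoot, and (iii) the multiplicity $\sum_in_i$ stays bounded. Naive rounding fails: the residual $\sum_i\{\alpha\lambda_i\}y_i$ can a priori be a fixed \emph{multiple} of $\|x\|_1$ and badly misaligned with $x$, making $h(x')-\mu_x(x')$ of order $\|x\|_1$ instead of of order the error at scale $\|x'\|_1$. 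Overcoming this uses that the largest weight satisfies $\lambda_{i^\ast}\ge1/(d+1)$, that the corresponding mesh point obeys $\mu_x(y_{i^\ast})\ge\mu(x)/a$ together with $\mu(y_{i^\ast})\le h(y_{i^\ast})\le\mu(x)+C\|x\|_1^{1/2}\log\|x\|_1$ — so $y_{i^\ast}$ is \emph{nearly aligned} with $x$, with misalignment controlled by the mesh step — and choosing which mesh points to keep and how to round so that $x'$ inherits this near-alignment and a strictly smaller $\mu$-size. Once such a contracting decomposition is available, the summation above closes and yields $h(x)\le\mu(x)+C\|x\|_1^{1/2}\log\|x\|_1$ for $\|x\|_1\ge M$, that is, $h\in GAP(M,C)$.
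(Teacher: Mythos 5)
The paper itself does not prove this statement — it imports it verbatim as Theorem~1.8 of Alexander~\cite{Alex97}, so there is no in-paper proof to compare against; I will compare against the argument that theorem actually requires. Your sketch has the right architecture: Fekete gives the lower bound $\mu(x)\le h(x)$, and the upper bound should come from iterating a decomposition over geometrically decreasing scales, using subadditivity, Carath\'eodory on $\text{Co}(Q_x^h(C))$, the mesh bound $h(y_i)\le \mu_x(y_i)+C\|x\|_1^{1/2}\log\|x\|_1$, and linearity of $\mu_x$. You also correctly identify the crux: after a naive integer rounding, the residual $x'=x-v$ need not be small at all (it can be a bounded multiple of $\|x\|_1$), and even if it were, the linear term does not telescope unless $\mu_x(x')$ is close to $\mu(x')$, i.e.\ unless $x'$ remains nearly aligned with $x$.

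But that diagnosis is where the proposal stops. The final paragraph only names the three properties the one-step decomposition must have — contraction $\|x'\|_1\le\rho\|x\|_1$ with $\rho<1$, near-alignment so that $\mu(x')-\mu_x(x')$ is of lower order, and uniformly bounded multiplicity $\sum_i n_i$ across all scales — and then asserts that "once such a contracting decomposition is available, the summation closes." It does not construct that decomposition, nor show that rounding can even be arranged to avoid $v=0$ when $\alpha$ is close to $1$ and each $\alpha\lambda_i<1$, nor control the accumulated $\sum_k C\|x^{(k)}\|_1^{1/2}\log\|x^{(k)}\|_1$ so as to land back on the same constant $C$ appearing in $GAP(M,C)$ (recall the theorem preserves $C$ exactly, not up to a multiplicative factor). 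These are not bookkeeping details one may wave away: constructing the aligned, contracting decomposition and balancing the constants is essentially the whole content of Alexander's theorem — the very reason $Q_x^h(C)$ is defined with the two-sided constraint $\mu_x(y)\le\mu(x)$, $h(y)\le\mu_x(y)+C\|x\|_1^{1/2}\log\|x\|_1$. As written, the proposal is a correct roadmap to Alexander's proof rather than a proof; the load-bearing lemma is missing.
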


\begin{defi}
On appelle $Q_x^h(C)$-chemin toute suite $(v_0,\dots,v_m)$ telle que pour tout $i \in \{0,\dots,m-1\}$, $v_{i+1}-v_i \in Q_x^h(C)$.

Soit $\gamma=(\gamma(0),\dots, \gamma(n))$ un chemin simple du graphe $\Zd$. 
On considère l'unique suite d'indices $(u_i)_{0 \le i \le m}$ telle que
$$
\begin{array}{l}
u_0=0, \; u_m=n, \\
\forall i \in \{0, \dots, m-1\} \quad \forall j \in \{u_i+1, \dots, u_{i+1}\} \quad \gamma(j)-\gamma(u_i) \in Q_x^h(C), \\
\forall i \in \{0, \dots, m-1\} \quad \gamma(u_{i+1}+1)-\gamma(u_i) \not \in Q_x^h(C).
\end{array}
$$
Le $Q_x^h(C)$-squelette de $\gamma$ est alors la suite $(\gamma(u_i))_{0 \le i \le m}$.
\end{defi}

Pour la distance chimique modifiée $h(.)=\E [D^*(0,.)]$, nous allons montrer le résultat suivant:

\begin{prop}
\label{propositionalex} Il existe des constantes $M>1$ et $C>0$  telles que si $\|x \|_1 \ge M$, alors pour tout $n$ suffisamment grand, il existe un chemin de $0$ à $nx$ avec un $Q_x^h(C)$-squelette contenant moins de $2n+1$ sommets.
\end{prop}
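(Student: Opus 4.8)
The goal is to produce, for a typical direction $x$ with $\|x\|_1$ large, a path from $0$ to $nx$ whose $Q_x^h(C)$-skeleton has at most $2n+1$ vertices; by Lemma~\ref{lemalex} with $a=2$ this will place $h=\E[D^*(0,\cdot)]$ in $CHAP(M,C,2)$, and then Theorem~\ref{Alex2} yields the desired gap estimate~(\ref{equetlesperance1}). The natural candidate path is a near-geodesic for $D^*$ (or for $D^{t}$) from $0$ to $nx$; the point will be that such a geodesic can be cut into $\le 2n$ pieces, each of which is a legal increment of $Q_x^h(C)$. An increment $v$ belongs to $Q_x^h(C)$ when three things hold: $\|v\|_1\le(2d+1)\|x\|_1$, $\mu_x(v)\le\mu(x)$, and $h(v)\le\mu_x(v)+C\|x\|_1^{1/2}\log\|x\|_1$. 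The first is a crude length bound, the second is essentially a bookkeeping/geometry statement about not overshooting in the $x$-direction, and the third is where the concentration estimate~(\ref{equmoder}) and the expectation control must be fed in.

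First I would fix $M$ and work with $\|x\|_1\ge M$. For $n$ large, take a path $\gamma$ realizing (or nearly realizing) $D^{t}(0^{*},(nx)^{*})$ for a suitably chosen mesoscopic scale $t$ comparable to $\sqrt{\|x\|_1}\log\|x\|_1$ — the scale at which Lemma~\ref{pastropdiff} makes $D^{t}$ and $D^{*}$ agree with overwhelming probability and at which the moderate-deviation bound~(\ref{equmoder}) is valid down to $x\asymp 1+\log\|y\|_1$. Along $\gamma$ I would mark successive points $v_0=0,v_1,\dots$ greedily: given $v_i$, let $v_{i+1}$ be the last vertex of $\gamma$ reachable from $v_i$ while staying inside $Q_x^h(C)$. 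The key quantitative claim is that each such block advances the $\mu_x$-coordinate by roughly $\mu(x)$, so that after about $n$ blocks one has travelled a $\mu_x$-distance $\approx n\mu(x)=\mu_x(nx)$, hence reached $nx$; the factor $2$ slack in $m\le 2n$ absorbs the blocks that get truncated for the $\|v\|_1\le(2d+1)\|x\|_1$ reason or because $\mu_x(v)$ hits its cap before a full $\mu(x)$ of progress is made.

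The heart of the argument is showing that, with positive (indeed high) probability, the greedily chosen increments really do lie in $Q_x^h(C)$ for all blocks simultaneously, i.e. that $h(v_{i+1}-v_i)\le\mu_x(v_{i+1}-v_i)+C\|x\|_1^{1/2}\log\|x\|_1$. Here one uses that along a $D^{*}$-geodesic the \emph{realized} length of a sub-path between $v_i$ and $v_{i+1}$ is what one controls, while $h$ of the increment is an expectation; the bridge is: $h(v_{i+1}-v_i)=\E D^{*}(0,v_{i+1}-v_i)\le \E D^{*}(v_i,v_{i+1})$, and $D^{*}(v_i,v_{i+1})$ along the geodesic is at most the geodesic length, which is close to its mean $\mu_x(v_{i+1}-v_i)$ up to fluctuations controlled by~(\ref{equmoder}) and up to the systematic bias bounded — but here one must be careful not to invoke~(\ref{equetlesperance1}) circularly; instead one uses only the one-sided inequality $\mu\le h$ together with the concentration of $D^{*}$ about \emph{its own} mean, and a union bound over the $O(n)$ blocks. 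Choosing $C$ large enough (depending on the constants in~(\ref{equmoder}) and in Lemma~\ref{pastropdiff}) makes the failure probability for a fixed block summably small in $\|x\|_1$, hence $<1/(2n+1)$ after also letting $n\to\infty$, so a good path exists.

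I expect the main obstacle to be exactly this non-circularity issue combined with the geometry of $\mu_x$: one needs the increments to satisfy $\mu_x(v_{i+1}-v_i)\le\mu(x)$ \emph{and} to sum to the right thing, which forces a careful choice of where to cut — cutting a geodesic at the last point before its $\mu_x$-coordinate would exceed a running multiple of $\mu(x)$, rather than at the last point inside $Q_x^h(C)$, and then checking the $h$-bound holds for such a cut. Controlling the accumulation of the $O(\sqrt{\|x\|_1}\log\|x\|_1)$ errors over $n$ blocks without them degrading the per-increment bound is the delicate point; it works because the error budget $C\|x\|_1^{1/2}\log\|x\|_1$ is allowed afresh in \emph{each} increment rather than being shared, which is precisely the design of $Q_x^h(C)$. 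The remaining ingredients — the length bound $\|v\|_1\le(2d+1)\|x\|_1$ from the geodesic staying in a box of the right size (cf.\ the event $L$ in Lemma~\ref{pastropdiff}), and the transfer from $D^{t}$ back to $D^{*}$ — are routine given the estimates already proved.
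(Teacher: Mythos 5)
Your high-level plan --- take a near-geodesic, cut it greedily into increments lying in $Q_x^h(C)$, and argue with positive probability that the number of blocks is at most $2n$ --- matches the paper's strategy. But the proposal misidentifies (or at least glosses over) the hard point, and the mechanism you describe for the probability estimate would not close.

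The critical gap is your ``union bound over the $O(n)$ blocks.'' The blocks are not fixed in advance: the $Q_x$-skeleton is a function of the random geodesic, so a union bound over blocks is not available. What one must do is union over \emph{all} $Q_x$-chains of $m+1$ points issued from $0$, and there are $\asymp (K\|x\|_1^d)^m$ of them. To beat this exponential count, one needs, for each \emph{fixed} chain $(v_0,\dots,v_m)$, a probability bound that is itself exponentially small in $m$ with a base that can be made $<1/(K\|x\|_1^d)$ by the choice of $C$. A single application of~(\ref{equmoder}) per increment, followed by a naive union over the $m$ increments, does not give such a bound: one needs the increments to behave like independent random variables. The paper obtains this via a BK-type inequality (Alexander's Theorem 2.3) applied to the \emph{constrained} geodesic length $D(v_1,v_m;(v_i))$, which is why the key lemma compares $\sum_i \E[D^*(v_i,v_{i+1})]$ to $D(v_1,v_m;(v_i))$ and not to $D^*$: the functional $D$ is monotone in the configuration, so BK applies, whereas $D^*$ is not. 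Your sketch does not mention a BK-type step at all, and it phrases the estimate purely in terms of $D^*$, so the independence that makes the union bound viable is missing. This is precisely the ``non-circularity'' discomfort you flag but do not resolve.

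A second, related issue: you assert that ``each block advances the $\mu_x$-coordinate by roughly $\mu(x)$.'' This is true for the blocks in $D_x$ (those that exit $Q_x$ because $\mu_x$ hits its cap), but it is \emph{false} for the blocks in $\Delta_x$, which exit because the $h$-condition fails while $\mu_x$ progress may be tiny. The whole difficulty of the proof is to bound the number of these short $\Delta_x$-increments; that is what the BK-controlled lemma, the property $h(y)-\mu_x(y)\ge\frac{C'}{2}\|x\|_1^{1/2}\log\|x\|_1$ for $y\in\Delta_x$ (Lemma~\ref{accroissements}(2)), and the comparison of $\sum_i \E[D^*(v_i,v_{i+1})]$ with $n\mu(x)$ accomplish. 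Saying ``the factor $2$ slack absorbs the truncated blocks'' is not an argument; the $\Delta_x$/$D_x$ split and the resulting inequalities $|S|\le m/8$ and $|L|\le\frac{6}{5}n+m/8$ are where the factor $2$ is actually earned. The transfer between $D^t$ and $D^*$, on which you spend a good deal of space, plays no role at this stage of the paper; the relevant inputs here are~(\ref{equmoder}),~(\ref{controlnormedeux}),~(\ref{amasfini}), the BK inequality, and the deterministic geometry of $Q_x$.
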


Voyons d'abord comment cette proposition permet d'obtenir  l'inégalité~(\ref{equetlesperance1}) et d'achever la preuve du théorème~\ref{concentrationD*}.  

\begin{proof}[Preuve de l'inégalité~(\ref{equetlesperance1})
] 

La proposition~\ref{propositionalex} et le lemme~\ref{lemalex} assurent que $h(.)=\E [D^*(0,.)]$ est dans $CHAP(M,C,2)$, ce qui implique, via le théorème~\ref{Alex2}, que $h$ est dans $GAP(M,C)$. Ceci donne l'estimée~(\ref{equetlesperance1}) pour tout $y \in \Zd$ tel que $\|y\|_1 \ge M$, et donc pour tout $y \in \Zd$ quitte à augmenter la valeur de $\C{equetlesperance1}$.
\end{proof}

Passons maintenant à la preuve de la proposition~\ref{propositionalex}.
On choisit désormais $h(.)=\E[D^*(0,.)]$, on prend $\beta$ et $C$ tels que
\begin{equation}
\label{choixbetaC}
0<\beta<\B{equmoder} \text{ et } C{}>\sqrt{2d}\left(\frac{d}{\beta}+\C{equmoder} \right), 
\end{equation}
et on pose $C'=48C{}$.
On définit
\begin{eqnarray*}
{Q}_x & = & {Q}^h_x(C'), \\
G_x & = & \{y \in \Zd: \; \mu_x(y)>\mu(x)\}, \\
\Delta_x & = & \{y \in {Q}_x: \; y \text{ adjacent à } \Zd \backslash {Q}_x, \; y \text{ non adjacent à } G_x\}, \\
D_x & = & \{y \in {Q}_x: \; y \text{ adjacent à } G_x\}.
\end{eqnarray*}

\begin{lemme}
\label{accroissements}
Il existe une constante $M$ telle que si $\|x \|_1\ge M$, alors
\begin{enumerate}
\item si $y \in Q_x$, alors $\mu(y) \le 2 \mu(x)$ et $\|y\|_1 \le 2d \|x\|_1$;
\item si $y \in \Delta_x$, alors $\E[D^*(0,y)]-\mu_x(y) \ge \frac{C'}2 \|x \|_1^{1/2}\log \|x\|_1$;
\item si $y \in D_x$, alors $\mu_x(y) \ge \frac56 \mu(x)$;
\item si $x$ est assez grand, alors $(\|y\|_1\le\|x\|_1^{1/2}) \Longrightarrow (y\in Q_x)$. 
\end{enumerate}
\end{lemme}

\begin{proof}
Les arguments sont simples et essentiellement déterministes. On pourra se reporter au lemme 3.3 dans Alexander~\cite{Alex97}, qui est l'analogue dans le cadre de la percolation de premier passage. On utilise en particulier le fait que $\E [D^*(0, \pm e_i)] \le \rho^*+A_{\ref{controlnormedeux}}<+\infty$, qui remplace l'intégrabilité du temps de passage d'une arête.
\end{proof}

On note $D(v_1,v_m;(v_i))$ la longueur d'un plus court chemin ouvert entre $v_1$ et $v_m$ contraint à passer dans cet ordre par chacun des $v_i$. Alors
\begin{lemme}
$$\displaystyle \lim_{\|x\|_1 \to +\infty}\P \left(
\begin{array}{c}
\exists m \ge 1 \quad \exists \text{ un } Q_x\text{-chemin } (v_0=0, \dots, v_m): \\ 
\displaystyle \sum_{i=1}^{m-1} \E [D^*(v_i,v_{i+1})] - D(v_1,v_m;(v_i)) >C{} m \|x\|_1^{1/2} \log\|x\|_1
\end{array} 
\right)=0.$$
\end{lemme}

La preuve de ce lemme repose sur un dénombrement des $Q_x$-chemins, sur le contrôle de moments exponentiels pour
$\E [D^*(v_i,v_{i+1})]-D(v_i,v_{i+1})$ et sur une inégalité de type~BK. Au regard du travail d'Alexander, il aurait été plus naturel de travailler avec des quantités du type $\E [D^*(v_i,v_{i+1})]-D^*(v_i,v_{i+1})$: malheureusement, la fonctionnelle $D^*$ n'est pas monotone, contrairement à $D$, ce qui compromet l'utilisation d'une inégalité de type~BK. On doit donc ici jongler une fois de plus entre $D$ et $D^*$.

\begin{proof}
Soit $m \ge1$ et $x$ assez grand fixés.
Soit $(v_0=0, \; v_1, \; \dots, \; v_m)$ un $Q_x$-chemin partant de $0$.
Le lemme précédent implique que pour tout $i$, $\|v_{i+1}-v_i\|_1 \le 2d \|x\|_1$. On pose
\begin{eqnarray*}
Y_i & = & \E [D^*(v_i,v_{i+1})]-D^*(v_i,v_{i+1}) \\
Z_i & = & \E [D^*(v_i,v_{i+1})]-D(v_i,v_{i+1}).
\end{eqnarray*}
Le résultat de déviations modérées nous permet dans un premier temps de contrôler certains moments exponentiels de $Y_i$ si $x$ est assez grand. En effet, on écrit
$$\E \left[ \exp \left( \frac{\beta(Y_i)_+}{\sqrt{2d\|x\|_1}} \right) \right] =1+\int_{0}^{+\infty} \beta e^{\beta t} \P \left( {Y_i}\ge t \sqrt{2d\|x\|_1} \right)\ dt.$$
Remarquons que l'estimée~(\ref{controlnormedeux}) nous donne déjà la majoration simple:
$$
\max(Y_i,Z_i)\le \E [D^*(v_i,v_{i+1})]\le \A{controlnormedeux}+\rho^* \|v_{i+1}-v_i\|_1;
$$
en remarquant que $\|v_{i+1}-v_i\|_1 \ge 1$, ceci implique en particulier, dès que $\|x\|_1$ est assez grand, que
\begin{equation}
\label{chemisederegine}
\frac{\max(Y_i,Z_i)}{\sqrt{2d\|x\|_1}} 
\le 2 \rho^* \sqrt{\|v_{i+1}-v_i\|_1}.
\end{equation}
Ceci assure que si $t>2 \rho^* \sqrt{\|v_{i+1}-v_i\|_1}$, alors 
$\P( {Y_i}\ge t\sqrt{2d\|x\|_1})=0$.
D'autre part, le résultat de déviations modérées~(\ref{equmoder}) nous dit qu'il existe des constantes $\A{equmoder},\B{equmoder}>0$ telles que si $\C{equmoder}(1 +\log \|v_{i+1}-v_i\|_1) \le t \le 2 \rho^* \sqrt{\|v_{i+1}-v_i\|_1}$, alors, comme $\|v_{i+1}-v_i\|_1 \le 2d \|x\|_1$,
$$\P({Y_i}\ge t\sqrt{2d\|x\|_1}) \le \P({Y_i}\ge t\|v_{i+1}-v_i\|_1^{1/2})\le \A{equmoder} \exp(-\B{equmoder} t).$$
Ainsi, comme $\beta<\B{equmoder}$, on a
\begin{eqnarray}
 \E \left[ \exp \left(  \frac{\beta(Y_i)_+}{\sqrt{2d\|x\|_1}} \right) \right]
& \le & 1+\int_0^{\C{equmoder}(1 +\log (2d\|x\|_1))}\beta e^{\beta t} dt + \int_0^{+\infty} \beta e^{\beta t} \A{equmoder} e^{-\B{equmoder} t} dt \nonumber \\
& \le & 1+ (2de)^{\beta \C{equmoder}} \|x\|_1^{\beta \C{equmoder}} +\frac{\A{equmoder} \beta}{\B{equmoder}-\beta}. 
\label{momoexp}
\end{eqnarray}

Remarquons ici que notre contrôle des moments exponentiels de $Y_i$ est moins bon que dans le cas de la percolation de premier passage, où la borne, analogue à~(\ref{momoexp}), obtenue par Alexander est indépendante de $\|x\|_1$: ceci est dû à la renormalisation que nous avons utilisée pour obtenir les déviations modérées~(\ref{equmoder}).

Notons que
\begin{itemize}
\item si $v_i \not \communique v_{i+1}$, alors $(Z_i)_+=0$;
\item si $v_i \communique  v_{i+1}$ et $v_i \communique \infty$, alors $(Z_i)_+=(Y_i)_+$;
\end{itemize}
Ainsi, en utilisant la majoration~(\ref{chemisederegine}),
$$
\exp \left(  \frac{\beta(Z_i)_+}{\sqrt{2d\|x\|_1}} \right) 
 \le  1+\exp \left( \frac{\beta (Y_i)_+}{\sqrt{2d\|x\|_1}} \right)
 +
 \1_{ \left\{ \substack{v_i\communique v_{i+1} \\ v_i\not\communique\infty} \right\}}
 \exp \left( 2\beta\rho^* \sqrt{\|v_{i+1}-v_i\|_1} \right).
 $$
En utilisant~(\ref{amasfini}) puis~(\ref{momoexp}), on obtient alors, pour tout $x$ assez grand:
\begin{eqnarray}
& & \E \left[ \exp \left(  \frac{\beta(Z_i)_+}{\sqrt{2d\|x\|_1}} \right) \right] \label{mimiexp} \\
& \le & 1+ \E \left[ \exp \left(  \frac{\beta(Y_i)_+}{\sqrt{2d\|x\|_1}} \right) \right] +\exp \left( 2\beta\rho^* \sqrt{\|v_{i+1}-v_i\|_1} \right) \P \left(
\begin{array}{c}
v_i \communique v_{i+1}\\
v_i\not\communique \infty
\end{array}
\right)\nonumber  \\
& \le & 1+ \E \left[ \exp \left( \frac{\beta (Y_i)_+}{\sqrt{2d\|x\|_1}} \right) \right] +\A{amasfini} \exp \left( 2\beta\rho^* \sqrt{\|v_{i+1}-v_i\|_1}-\B{amasfini} \|v_{i+1}-v_i\|_1 \right)\nonumber\\
& \le &  (6d\|x\|_1)^{\beta\C{equmoder}}. \nonumber
\end{eqnarray}
On peut appliquer une inégalité de type BK à la quantité $D(v_1,v_m;(v_i))$: avec  le théorème 2.3 de Alexander~\cite{MR1202516}, pour tout $t>0$, si les $Z'_i$ sont des copies indépendantes des $Z_i$, alors, avec~(\ref{mimiexp}),
\begin{eqnarray*}
&& \P \left( \sum_{i=1}^{m-1} \E [D^*(v_i,v_{i+1})] - D(v_1,v_m;(v_i))>C{} m \|x\|_1^{1/2} \log\|x\| _1\right) \\
& \le & \P \left(\sum_{i=1}^{m-1} Z_i'>C{} m \|x\|_1^{1/2} \log\|x\|_1 \right) \\
& \le & \exp \left(-\frac{\beta C{} m \log\|x\|_1}{\sqrt{2d}} \right)\prod_{i=1}^{m-1}\E\left[\exp \left( \frac{ \beta Z_i}{\sqrt{2d\|x\|_1}} \right)\right]\\
& \le & \left( (6d)^{\beta\C{equmoder}} \|x\|_1^{-\frac{\beta C{}}{\sqrt{2d}} + \beta\C{equmoder}} \right) ^m.
\end{eqnarray*} 
Le lemme~\ref{accroissements} donne l'existence d'une constante $K$ telle qu'il y a au plus $(K\|x\|^d)^m$ $Q_x$-chemins de $m+1$ sommets issus de $0$, donc, en sommant sur tous 
ces chemins, on obtient
\begin{eqnarray*}
 &  & \P \left(
\begin{array}{c}
\exists \text{ un } Q_x\text{-chemin } (v_0=0, \dots, v_m): \\ 
\displaystyle \sum_{i=1}^{m-1}  \E [D^*(v_i,v_{i+1})] - D(v_1,v_m;(v_i)) >C{} m \|x\|_1^{1/2} \log\|x\|_1
\end{array} 
\right) \\
& \le & \left( K(6d)^{\beta\C{equmoder}} \|x\|_1^{d-\frac{\beta C{}}{\sqrt{2d}} + \beta\C{equmoder}} \right) ^m = \left( \frac{L}{\|x\|_1^{\alpha}} \right)^m,
\end{eqnarray*}
pour deux constantes $L$ et $\alpha$; le choix~(\ref{choixbetaC}) que nous avons fait pour $\beta$ et $C{}$ assure de plus $\alpha>0$. Ainsi,
si $x$ assez grand, $L \|x\|_1^{-\alpha}\le \frac12 $, de sorte qu'en sommant sur les longueurs $m$ possibles:
$$
\P \left(
\begin{array}{c}
\exists m \ge 1 \quad \exists \text{ un } Q_x \text{-chemin } (v_0, \dots, v_{m}): \\    
\displaystyle \sum_{i=1}^{m-1}  \E [D^*(v_i,v_{i+1})] - D(v_1,v_m;(v_i))  >C{} m \|x\|_1^{1/2} \log\|x\|_1
\end{array} 
\right) \\
 \le  \frac{2 L}{ \|x\|_1^{\alpha}},
$$
ce qui termine la preuve du lemme.
\end{proof}

\begin{proof}[Preuve de la proposition~\ref{propositionalex}]
Cette proposition est l'analogue de la proposition 3.4. de Alexander~\cite{Alex97}. 

La preuve de ce résultat déterministe utilise la "méthode probabiliste": nous allons montrer qu'avec une probabilité strictement positive, on peut contruire un tel $Q_x$-chemin issu de $0$ à partir d'un chemin réalisant $D^*(0,nx)$.
Avec le lemme précédent et l'estimée~(\ref{amasinfini}), on peut trouver $M>1$ tel que si $\|x\|_1\ge M$, 
\begin{eqnarray*}
 \P\left(
\begin{array}{c}
\exists m \ge 1 \quad \exists \text{ un } Q_x\text{-chemin } (v_0=0, \dots, v_m): \\ 
\displaystyle \sum_{i=1}^{m-1}  \E [D^*(v_i,v_{i+1}) ]- D(v_1,v_m;(v_i)) >C{} m \|x\|_1^{1/2} \log\|x\|_1
\end{array} 
\right) & \le & \frac15, \\
\text{et } \P \left(\|0^*\| \ge \|x\|_1^{1/2} \right) & \le & \frac15.
\end{eqnarray*}
Quitte à augmenter $M$, on peut de plus supposer que si $\|y\|_1\le \|x\|_1^{1/2}$ et si $\|x\|_1\ge M$, alors $y \in Q_x$.
On fixe alors $x\in \Zd$ avec $\|x\|_1\ge M$. 
Le lemme~\ref{limite} assure qu'il existe un $n_0 \in \N$ tel que
$$\forall n \ge n_0 \quad \P(D^*(0,nx)> n(\mu(x)+1))\le \frac15.$$
On fixe alors un $n \ge n_0$. Avec probabilité au moins $1/5$, les quatre propriétés suivantes sont donc satisfaites:
\begin{enumerate}[a)]
\item pour tout $m \ge 1$, pour tout $Q_x$-chemin $(v_0=0, \dots, v_m)$,
$$\sum_{i=1}^{m-1} \E [D^*(v_i,v_{i+1})] - D(v_1,v_{m};(v_i))\le C{} m \|x\|_1^{1/2} \log\|x\|_1,$$
\item $D^*(0,nx) \le n(\mu(x)+1)$,
\item $\|0^*\| _1\le \|x\|_1^{1/2}$,
\item $\|(nx)^*-nx\|_1 \le \|x\|_1^{1/2}$.
\end{enumerate}
On peut donc trouver un $\omega$ satisfaisant ces quatre propriétés, et nous allons travailler pour la suite avec cette réalisation $\omega$ particulière. Soit $v_1=0^*,\dots, v_{m}=(nx)^*$ le $Q_x$-squelette d'un chemin $\gamma$ réalisant
la distance chimique de $0^*$ à $(nx)^*$. On pose $v_0=0$, $v_{m+1}=nx$ : les propriétés c) et d) assurent qu'on obtient ainsi un $Q_x$-chemin de $0$ à $nx$. Nous allons montrer que $m+2 \le 2n+1$, ce qui terminera la preuve de la proposition. Remarquons que, par construction, 
 $$D(v_1,v_{m};(v_i))=D^*(0,nx).$$
Notre $Q_x$-chemin satisfait, pour ce $\omega$ particulier, d'après a),
\begin{equation}
\label{numero}
  \sum_{i=1}^{m-1} \E[D^*(v_i,v_{i+1})] - D(v_1,v_{m};(v_i))\le C{} m \| x\|_1^{1/2} \log\|  x\|_1.
\end{equation}
D'autre part, par définition de $Q_x$, avec la propriété d), le fait que $\mu$ est une norme,
et l'estimée~(\ref{gecart}),
\begin{eqnarray*}
m\mu(x)&\ge& \sum_{i=0}^{m-1}\mu_x(v_{i+1}-v_i)=\mu_x((nx)^*)\\
& =& \mu_x((nx))+\mu_x((nx)^*-nx)\\
& \ge& n\mu(x)-\mu((nx)^*-nx)
 \ge   n\mu(x)-\rho\|x\|_1^{1/2}.
\end{eqnarray*}
 Ainsi, quitte à augmenter $M$, on obtient que  $n\le \frac{11}{10}m$. Maintenant, avec b) et quitte à augmenter encore $M$ si nécessaire,
\begin{eqnarray*} 
\sum_{i=1}^{m-1} \E [D^*(v_i,v_{i+1})] 
& \le & D^*(0,nx)+ C{} m \| x\|_1^{1/2} \log\| x\|_1\\
&\le  &   n(\mu(x)+1)+C{} m \| x\|_1^{1/2} \log\| x\|_1\\
& \le & n\mu(x)+2C{} m \| x\|_1^{1/2} \log\| x\|_1.
\end{eqnarray*}
Nous allons maintenant distinguer, dans le $Q_x$-squelette, les accroissements courts et les accroissements longs:
\begin{eqnarray*}
S((v_i)) & = & \{i: \; 1 \le i \le m-1, \; v_{i+1}-v_i \in \Delta_x\}, \\
L((v_i)) & = & \{i: \; 1 \le i \le m-1, \; v_{i+1}-v_i \in D_x\}.
\end{eqnarray*}
Remarquons que la définition du $Q_x$-squelette implique que ces deux ensembles forment bien une partition de $\{1,\dots,m-1\}$. 
Majorons tout d'abord le nombre d'accroissements courts à l'aide du lemme~\ref{accroissements}, estimée~(2): rappelons $\mu_x(y)\le \mu(y)\le \E[D^*(0,y)]$, et donc
\begin{eqnarray*}
\sum_{i=1}^{m-1}\E [D^*(v_i,v_{i+1})] & = & \sum_{i=1}^{m-1}[ \mu_x(v_{i+1}-v_i)+(\E D^*(v_i,v_{i+1})-\mu_x(v_{i+1}-v_i))]\\
& \ge & \mu_x((nx)^*)-\mu_x(0^*)+|S((v_i))| \frac{C'}2 \|x\|_1^{1/2}\log\|x\|_1\\
& \ge & n\mu(x)-2\rho\|x\|_1^{1/2}+|S((v_i))| \frac{C'}2 \|x\|_1^{1/2}\log\|x\|_1.
\end{eqnarray*}
Ainsi, en cumulant les deux dernières estimées, quitte à augmenter $M$,
\begin{eqnarray*}
|S((v_i))| \frac{C'}2 \|x\|_1^{1/2}\log\|x\|_1 & \le & 2\rho\|x\|_1^{1/2} +2C{} m \| x\|_1^{1/2} \log\| x\|_1 \\
& \le & 3C{} m \| x\|_1^{1/2} \log\| x\|_1,
\end{eqnarray*}
d'où
$$|S((v_i))|\le 6\frac{C{}}{C'}m=\frac{m}{8}.$$
De la même manière, majorons le nombre d'accroissements longs à l'aide du lemme~\ref{accroissements}, estimée~(3): 
\begin{eqnarray*}
\sum_{i=1}^{m-1}\E[ D^*(v_i,v_{i+1})]&=&\sum_{i=1}^{m-1}[ \mu_x(v_{i+1}-v_i)+(\E [D^*(v_i,v_{i+1})]-\mu_x(v_{i+1}-v_i))]\\
& \ge & n\mu(x)-2\rho\|x\|_1^{1/2}+\frac56 |L((v_i))|\mu(x) \ge \frac56 |L((v_i))|\mu(x),
\end{eqnarray*}
quitte à augmenter $M$; ceci donne
$$|L((v_i))|\le\frac{6}{5}n+2C{} m \frac{\| x\|_1^{1/2} \log\| x\|_1}{\mu(x)}\le\frac{6}{5}n+m/8.$$
Finalement,
$m=|S((v_i))|+|L((v_i))|+\le \frac{6}{5}n+m/4$,
d'où $m\le\frac{8}{5}n$, ce qui termine la preuve.
\end{proof}

Nous tenons à remercier Raphaël Rossignol et Marie Théret qui nous ont gentiment communiqué l'extension du résultat de Boucheron, Lugosi et Massart.



\def\refname{Bibliographie}
\bibliographystyle{plain}
\bibliography{mdev-fr}


\end{document}